\documentclass{article}

\usepackage[latin1]{inputenc}
\usepackage{amssymb}
\usepackage{amsfonts}
\usepackage{amscd}
\usepackage{mathrsfs}
\usepackage[dvips]{graphicx}
\usepackage[T1]{fontenc}
\usepackage[english]{babel}
\usepackage{marvosym}
\usepackage{amsthm}
\usepackage{multicol}
\usepackage[all]{xy}

\usepackage{todonotes}

\usepackage{graphicx,tikz}

\usepackage{pb-diagram}

\usepackage{bm}

\usepackage{wasysym}

\usepackage{color}

\usetikzlibrary{decorations.pathreplacing,backgrounds,decorations.markings,arrows}
\tikzset{wei/.style={draw=red,double=red!40!white,double distance=1.5pt,thin}}
\tikzset{bdot/.style={fill,circle,color=blue,inner sep=3pt,outer sep=0}}
\tikzset{dir/.style={postaction={decorate,decoration={markings,mark=at position .8 with {\arrow[scale=1.3]{>}}}}}}

\theoremstyle{plain}
\newtheorem{thm}{Theorem}[section]
\newtheorem{lem}[thm]{Lemma}

\newtheorem{prop}[thm]{Proposition}
\newtheorem{conj}[thm]{Conjecture}
\newtheorem{coro}[thm]{Corollary}

\theoremstyle{definition}
\theoremstyle{remark}
\newtheorem{rk}[thm]{Remark}
\newtheorem{df}[thm]{Definition}


\setcounter{tocdepth}{1}

\def\bbC{\mathbb{C}}

\def\bbF{\mathbb{F}}

\def\bbP{\mathbb{P}}
\def\bbQ{\mathbb{Q}}

\def\bbZ{\mathbb{Z}}

\def\calE{\mathcal{E}}
\def\calF{\mathcal{F}}
\def\calG{\mathcal{G}}

\def\calO{\mathcal{O}}

\def\calQ{\mathcal{Q}}

\def\calX{\mathcal{X}} 
\def\calY{\mathcal{Y}}

\def\frakg{\mathfrak{g}}

\def\bfk{\mathbf{k}}

\def\bfv{\mathbf{v}}
\def\bfvv{\overline{\mathbf{v}}}

\def\bfw{\mathbf{w}}
\def\bfww{\overline{\mathbf{w}}}

\def\bfU{\mathbf{U}}

\def\bfV{\mathbf{V}}
\def\bfW{\mathbf{W}}

\newcommand\restr[2]{{
  \left.\kern-\nulldelimiterspace 
  #1 
  \vphantom{\big|} 
  \right|_{#2} 
  }}

\def\dimI{\dim_I}

\def\homo{\operatorname{\it \mathscr{H}\kern-.25em om}}
\def\ext{\operatorname{\it \mathscr{E}\kern-.25em xt}}
\def\edo{\operatorname{\it \mathscr{E}\kern-.25em nd}}
\def\der{\operatorname{\it \mathscr{D}\kern-.25em er}}

\def\mod{\mathrm{mod}}

\def\Hom{\mathrm{Hom}}

\def\Ext{\mathrm{Ext}}

\def\Id{\mathrm{Id}}

\def\Rep{\mathrm{Rep}}

\def\IC{\mathrm{IC}}

\def\eG{{\widehat\Gamma_d}}

\def\Repd{\Rep^0(\eG)}

\title{\huge Flag versions of quiver Grassmannians for Dynkin quivers have no odd cohomology}
\author{Ruslan Maksimau \\\\
AGM - Analyse, géométrie et modélisation,\\
CY Cergy Paris Université,\\
2 Av. Adolphe Chauvin, 95300 Pontoise, France\\
            ruslmax@gmail.com, ruslan.maksimau@cyu.fr
}

\date{}

\bibliographystyle{unsrt}

\begin{document}
\maketitle
\setcounter{tocdepth}{2}

\begin{abstract}
We prove the conjecture that flag versions of quiver Grassmannians (also known as Lusztig's fibers) for Dynkin quivers (types $A$, $D$, $E$) have no odd cohomology groups over an arbitrary ring. Moreover, for types $A$ and $D$ we prove that these varieties have affine pavings. We also show that to prove the same statement for type $E$, it is enough to check this for indecomposable representations. 

We also give a flag version of the result of Cerulli Irelli-Esposito-Franzen-Reineke on rigid representations: we prove that flag versions of quiver Grassmannians for rigid representations have a diagonal decomposition. In particular, they have no odd cohomology groups.
\end{abstract}

\tableofcontents

\section{Introduction}

\subsection{Motivation}
The motivation of this paper comes from the study of KLR (Khovanov-Lauda-Rouquier) algebras in \cite{Mak-parity}. KLR algebras, also called quiver-Hecke algebras, were introduced by Khovanov-Lauda \cite{KL} and Rouquier \cite{Rouq-2KM} for categorification of quantum groups. Let $\Gamma$ be a quiver without loops and let $\nu$ be a dimension vector for this quiver. Let $\bfk$ be a field. Then we can construct  the KLR algebra $R_\nu$ over $\bfk$ associated to $\Gamma$ and $\nu$.

To motivate our resaerch, we recall some facts about the geometric study of KLR algebras. The reader can see \cite{Mak-parity} for more details.

A geometric construction of KLR algebras is given in \cite{Rouq-2KM} and \cite{VV} (over a field of characteristic zero). The positive characteristic case is done in \cite{Mak-parity}. Let us describe the idea of this geometric construction. There is a complex algebraic group $G$ and complex algebraic varieties $X$ and $Y$ (depending on $\Gamma$ and $\nu$) with a $G$-action and a proper $G$-invariant map $\pi\colon X\to Y$ such that the KLR algebra $R_\nu$ is isomorphic to the extension algebra $\Ext^*_G(\pi_*\underline\bfk_X, \pi_*\underline\bfk_X)$. Here $\underline\bfk_X$ is the constant $G$-equivariant sheaf on $X$. The sheaf $\pi_*\underline\bfk_X$ is the pushforward of $\underline\bfk_X$, this pushforward is an element of the $G$-equivariant bounded derived category of sheaves of $\bfk$-vector spaces on $Y$. Abusing the terminology, we say sometimes "a sheaf" for "a complex of sheaves".

For understanding the algebra $R_\nu$, it is important to understand the decomposition of the sheaf $\pi_*\underline\bfk_X$ into indecomposables.   Now, assume that $\Gamma$ is a Dynkin quiver. In this case the number of $G$-orbits in $Y$ is finite. Assume first that the characteristic of $\bfk$ is zero. We can deduce from the decomposition theorem \cite[Thm.~6.2.5]{BBD} and from \cite[Thm.~2.2]{Rein} that the indecomposable direct summands of $\pi_*\underline\bfk_X$ up to shift are exactly the sheaves of the form $\IC(\calO)$, where $\calO$ is a $G$-orbit in $Y$ and $\IC(\calO)$ is the simple perverse sheaf associated to the orbit $\calO$.
Now, we want to understand what happens if the characteristic of $\bfk$ is positive. In this case the decomposition theorem fails, so the theory of perverse sheaves does not help us to understand $\pi_*\underline\bfk_X$. However, Juteau-Mautner-Williamson \cite{JMW} introduce a new tool for this: parity sheaves. For the variety $Y$ as above, their construction yields some new sheaves $\calE(\calO)$ on $Y$. It happens, that in characteristic zero we have $\IC(\calO)=\calE(\calO)$. However, in positive characteristics, the sheaves $\calE(\calO)$ behave better than $\IC(\calO)$. In particular, \cite{JMW} gives a version of the decomposition theorem for parity sheaves that also works for positive characteristics. However, this version needs an extra-condition: the fibers of $\pi$ must have no odd cohomology groups over $\bfk$. Note that the fibers of the  map $\pi$ are isomorphic to flag versions of quiver Grassmannians. 

A version of the following conjecture is considered in \cite{Mak-parity}.

\begin{conj}
\label{conj:fibres}
Assume that $\Gamma$ is a Dynkin quiver. Then for each field $\bfk$ and each $y\in Y$ we have $H^{\rm odd}(\pi^{-1}(y),\bfk)=0$.
 \end{conj}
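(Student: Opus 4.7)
The plan is to prove the stronger statement that $\pi^{-1}(y)$ admits a partition into locally closed pieces each isomorphic to an affine space (an $\alpha$-partition). By standard arguments — using the long exact sequence of the pair (closed stratum, open complement) in compactly supported cohomology, inducted over the finite stratification — this implies that $H^*(\pi^{-1}(y),\bbZ)$ is torsion-free and concentrated in even degrees, whence the vanishing of odd cohomology holds for every coefficient field $\bfk$ by universal coefficients.

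The first step is to translate the problem into algebra: the fiber $\pi^{-1}(y)$ is isomorphic to the flag quiver Grassmannian $\mathcal{F}\ell^{\underline{d}}(M)$ of a representation $M$ of $\Gamma$ depending on $y$, parametrizing flags $0=M^{(0)}\subseteq M^{(1)}\subseteq\cdots\subseteq M^{(n)}=M$ of subrepresentations of prescribed dimension vectors $\underline{d}$. Gabriel's theorem gives a decomposition $M\cong\bigoplus_i M_i^{a_i}$ into indecomposables indexed by positive roots of the Dynkin root system.

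The main tool I would use is a Bia\l ynicki-Birula decomposition associated with a generic cocharacter $\bbC^*\to\mathrm{Aut}(M)$, acting on $\mathcal{F}\ell^{\underline{d}}(M)$ by its natural action on subrepresentations. The fixed locus consists of flags compatible with the weight-space decomposition of $M$ under this torus, so it is a disjoint union of products of flag quiver Grassmannians of smaller representations — ideally, of the indecomposable summands $M_i$. If (i) the Bia\l ynicki-Birula attracting sets are affine bundles over their fixed-point components, and (ii) the flag quiver Grassmannians of the $M_i$ themselves admit $\alpha$-partitions, an induction on the number of summands of $M$ (or on $\dim M$) delivers the affine paving. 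Step (ii) is the reduction to the indecomposable case announced in the abstract.

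For types $A$ and $D$, I expect this to go through directly: the Auslander-Reiten quiver is combinatorially transparent, indecomposables can be realized as interval/segment modules, and one can write down the cells of the paving in explicit coordinates adapted to this structure, inductively in dimension. The main obstacle is type $E$: even granting the reduction to the indecomposable case, the indecomposable representations of $E_6,E_7,E_8$ have no uniform concrete model analogous to the segment modules for $A$ and $D$, so producing an affine paving of $\mathcal{F}\ell^{\underline{d}}(M_i)$ for each indecomposable $M_i$ is genuinely delicate. One would either resort to a finite case check using the classification of indecomposables, or relax the affine-paving target and prove the weaker conclusion of vanishing odd cohomology by a point-counting (Weil conjectures / Frobenius trace) argument once the varieties are shown to be defined over $\mathbb{F}_q$ with polynomial point counts, which is enough for Conjecture~\ref{conj:fibres} even if the geometric $\alpha$-partition is not constructed.
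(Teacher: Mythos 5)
Your high-level strategy---reduce to flag quiver Grassmannians, reduce further to the indecomposable summands of the ambient representation, handle types $A$ and $D$ by explicit affine pavings, treat $E$ separately---is the same as the paper's. But two of your steps are not as straightforward as you suggest, and the paper's solutions to them are its actual content.

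First, the reduction to indecomposables. You propose a Bia\l ynicki-Birula decomposition for a generic $\bbC^*\subset\mathrm{Aut}(\bfV)$ and take for granted (your step~(i)) that the attracting sets are affine bundles over fixed-point components. For a \emph{smooth} projective variety this is a theorem, but $\calF_\bfv(\bfV)$ is in general singular, so you cannot invoke BB theory off the shelf. The paper proves the needed affine-bundle (in fact vector-bundle) structure \emph{directly}: for a split short exact sequence $0\to\bfV\to\bfU\to\bfW\to0$ with $\Ext^1_\Gamma(\bfW,\bfV)=0$, it shows $\calF_{\bfv,\bfw}(\bfU)\to\calF_\bfv(\bfV)\times\calF_\bfw(\bfW)$ is a vector bundle (Proposition~\ref{prop:Ext=0-bundle}). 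The strata $\calF_{\bfv,\bfw}(\bfU)$ are precisely the attracting sets you have in mind. Making this work for flags (as opposed to a single quiver Grassmannian as in \cite{IEFR}) requires a new idea: passing to a category $\Rep^0(\eG)$ of representations of an extended quiver, where one is forced to confront nonzero $\Ext^2$; the $\Ext$-vanishing lemmas of Section~\ref{subs-Ext-vanish} exist to control this. None of this machinery, which is where the real work lies, appears in your sketch.

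Second, type $E$. Your fallback of point-counting over $\bbF_q$ plus the Weil conjectures does not deliver the conjecture as stated: a polynomial point count gives $H^{\rm odd}(X,\bbQ_\ell)=0$, hence vanishing over fields of characteristic $0$, but it does not control torsion in $H^*(X,\bbZ)$, and hence does not give $H^{\rm odd}(X,\bfk)=0$ for $\bfk$ of positive characteristic (which is exactly the case of interest for parity sheaves). Your other fallback, a finite case check over indecomposables of $E_6,E_7,E_8$, is in principle possible but is not what the paper does. The paper's actual route is to observe that indecomposable representations of Dynkin quivers are \emph{rigid} ($\Ext^1_\Gamma(\bfV,\bfV)=0$), show that for rigid $\bfV$ the variety $\calF_\bfv(\bfV)$ is smooth projective, and then prove it has a \emph{diagonal decomposition} in its Chow ring by exhibiting the class of the diagonal as a top Chern class of an explicit vector bundle on $\calF_\bfv(\bfV)\times\calF_\bfv(\bfV)$ (again built via Proposition~\ref{prop:Ext=0-bundle}), which by Ellingsrud--Str{\o}mme gives $H^{\rm odd}(\,\cdot\,,\bbZ)=0$. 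This is the genuinely new idea for type $E$ and it is missing from your proposal.
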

It is proved in \cite{Mak-parity} that if Conjecture \ref{conj:fibres} holds, then in any characteristic the indecomposable direct summands of $\pi_*\underline\bfk_X$, up to shift, are exactly the sheaves of the form $\calE(\calO)$.
Some other consequences of this conjecture are explained in \cite[Sec.~3.10]{Mak-parity}.
  Conjecture \ref{conj:fibres} implies that parity sheaves are stable under the convolution product and then we get a geometric construction (similar to Lusztig's construction in terms of perverse sheaves) of the quantum group $U^-_q(\frakg)$ for types $A$, $D$, $E$ in terms of parity sheaves. Without Conjecture \ref{conj:fibres}, it was only clear how to do the geometric construction of the coproduct but not of the product in $U^-_q(\frakg)$ using parity sheaves.
In particular, this yields a new and interesting basis in the quantum group $U^-_q(\frakg)$ for types $A$, $D$, $E$ in terms of parity sheaves.

Conjecture \ref{conj:fibres} is proved in \cite{Mak-parity} only in type $A$. It remained open for types $D$ and $E$.
An attempt to prove	Conjecture \ref{conj:fibres} is given in \cite{McNam}. The paper \cite{McNam} studies algebras of the form $\Ext^*_G(\pi_*\underline\bfk_X, \pi_*\underline\bfk_X)$ for more general $G$, $X$ and $Y$. This paper relates the condition ${H^{\rm odd}(\pi^{-1}(y),\bfk)=0}$ with the polynomial quasi-heredity of the algebra  
$\Ext^*_G(\pi_*\underline\bfk_X, \pi_*\underline\bfk_X)$. On the other hand, KLR algebras $R_\nu$ for Dynkin quivers are known to be polynomial quasi-hereditary due to \cite{BKM}.

In the present paper we prove Conjecture \ref{conj:fibres}. We don't use KLR algebras, we work directly with the geometric description of the fibers of $\pi$. These fibers are flag versions of quiver Grassmannians. Our result proves in fact more than Conjecture \ref{conj:fibres}: we also show that the fibers have no odd cohomology groups over an arbitrary ring $\bfk$, not obligatory a field. Moreover, in types $A$ and $D$ we also show that the fibers have affine pavings. Note that this result is also stronger than the similar result from \cite{Mak-parity} in type $A$.

There is an example of a situation where it is important to know that the fibers of $\pi$ have no odd cohomology over a ring (not only over a field). In the Williamson's counterexample \cite{Will} in type $A$ to the Kleshchev-Ram conjecture, he uses the modular system $(\bbQ_p, \bbZ_p, \bbF_p)$. In particular, for this, it is important to know that we have $H^{\rm odd}(\pi^{-1}(y),\bbZ_p)=0$. The results of the present paper allow us to use the same approach in types $D$ and $E$.

One more application of Conjecture \ref{conj:fibres} to Kato's reflection functors is given in \cite[Sec.~7]{McNam}.
Moreover, the main result of the present paper together with \cite[Sec.~4]{McNam} yields an alternative proof of the fact that KLR algebras of Dynkin ($A$, $D$, $E$) types are  polynomial quasi-hereditary.

\subsection{Results}
From now on, we assume that $\bfk$ is an arbitrary ring (not obligatory a field). We always assume that our quiver $\Gamma$ has no oriented cycles.

The fibers of $\pi$ have an explicit geometric description: they are flag versions of quiver Grassmannians for complete flag types. However, all the methods of the present paper work well for non-complete flags. So we will also allow non-complete flags below. 

Let $\Gamma=(I, H)$ be a quiver, where $I$ and $H$ are the sets of its vertices and arrows respectively. Fix an increasing sequence ${\bfv=(\bfv_1,\ldots,\bfv_d)}$ of dimension vectors and set $v=\bfv_d$. Let $\bfV$ be a representation of $\Gamma$ with dimension vector $v$. A flag version of the quiver Grassmannian is the following variety:
$$
\calF_\bfv(\bfV)=\{\bfV^1\subset \bfV^2\subset\ldots\subset \bfV^d=\bfV;~ \dimI\bfV^r=\bfv_r\},
$$
where each $\bfV^r$ is a subrepresentation of $\bfV$.
Quiver Grassmannians are special cases of $\calF_\bfv(\bfV)$ with $d=2$.
Note that the fibers $\pi^{-1}(y)$ discussed above are always of the form $\calF_\bfv(\bfV)$. The definition of our variety $\calF_\bfv(\bfV)$ coincides with what is usually called \emph{Lusztig's fibers} in the case when we consider full flags (or flag types where each step is concentrated at one vertex of the quiver). For other flag types, an extra assumption is sometimes made for the quotients $\bfV^r/\bfV^{r-1}$. We do not impose this assumption in our definition. It is explained in \cite{Zhou} how our approach can be adapted to the definition with an extra assumption.

\medskip
The main results of the paper are the following two theorems.
\begin{thm}
\label{thm:cell-intro}
Assume that $\Gamma$ is a Dynkin quiver of type $A$ or $D$. Then the variety $\calF_\bfv(\bfV)$ is either empty or has an affine paving.
\end{thm}

\begin{thm}
\label{thm:odd0-intro}
Assume that $\Gamma$ is a Dynkin quiver (types $A$, $D$, $E$). Then, for every ring $\bfk$, we have $H^{\rm odd}(\calF_\bfv(\bfV),\bfk)=0$.
\end{thm}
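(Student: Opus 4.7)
The plan is to handle the three Dynkin types by leveraging stronger statements already in the paper. For types $A$ and $D$, Theorem \ref{thm:cell-intro} gives an $\alpha$-partition of $\calF_\bfv(\bfV)$ into affine spaces. Since the cohomology of an affine space $\bbA^n$ (with $\bbZ$ coefficients) is concentrated in degree $0$, a routine induction on the pieces of the $\alpha$-filtration, using the long exact sequence of a closed-open decomposition at each step, propagates the ``no odd cohomology over $\bbZ$'' property and gives the result in these two cases.

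The remaining case is type $E$. The first step is to carry out the reduction announced in the abstract: if $\bfV = \bfV_1\oplus \bfV_2$ with both summands nonzero, then $\calF_\bfv(\bfV)$ admits a stratification into locally closed pieces, indexed by pairs of auxiliary dimension sequences, each of which is an affine bundle over a product $\calF_{\bfv'}(\bfV_1) \times \calF_{\bfv''}(\bfV_2)$. The construction is an iterated flag version of the Cerulli Irelli--Esposito--Franzen--Reineke diagonal decomposition: to a flag $\bfU^1\subset \cdots \subset \bfU^d = \bfV$ one associates the refined data given by the projections of each $\bfU^r$ onto $\bfV_1$ together with the intersections $\bfU^r\cap \bfV_2$. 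Once the strata are identified with affine bundles, the K\"unneth formula over $\bbZ$ together with induction on the number of indecomposable summands of $\bfV$ reduces the type $E$ case to the one where $\bfV$ is indecomposable.

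For the base case, recall that by Gabriel's theorem every indecomposable representation of a Dynkin quiver is rigid, i.e.\ satisfies $\Ext^1(\bfV,\bfV) = 0$. Hence the flag version of the Cerulli Irelli--Esposito--Franzen--Reineke theorem, also established in this paper (as indicated in the abstract), applies and produces a diagonal decomposition of $\calF_\bfv(\bfV)$. Its existence alone implies the vanishing of odd cohomology with $\bbZ$ coefficients, completing the type $E$ case.

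The main obstacle is the reduction step in type $E$: one must make the stratification compatible with the \emph{iterated} flag structure, i.e.\ ensure that at each nesting $\bfU^r\subset \bfU^{r+1}$ the projections to $\bfV_1$ and intersections with $\bfV_2$ fit together so that each stratum is genuinely an affine bundle over a product of flag quiver Grassmannians, not merely a constructible subset. This requires a careful parametrization of the strata, whose fiber dimension must match the ranks of suitable $\Hom$ and $\Ext$ spaces between the relevant subrepresentations of $\bfV_1$ and $\bfV_2$. Once this geometric input is in place, the cohomological conclusion, together with the induction and the invocation of the rigid base case, is formal.
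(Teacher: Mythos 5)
Your proposal follows essentially the same route as the paper: types $A$ and $D$ from the $\alpha$-partition, and for type $E$ a reduction to indecomposables via a stratification into bundles over products, plus the rigid case handled by a diagonal decomposition. The one place where you stop short is exactly the step the paper devotes most of its technical effort to, and you correctly flag it as ``the main obstacle'': showing that the strata of $\calF_{\bfv}(\bfV_1 \oplus \bfV_2)$, cut out by prescribing the flag type of $\phi \cap \bfV_1$, are genuinely vector bundles over $\calF_{\bfv'}(\bfV_1) \times \calF_{\bfv''}(\bfV_2)$ rather than merely constructible subsets. This is Proposition~\ref{prop:Ext=0-bundle} in the paper, and it requires the hypothesis $\Ext^1_\Gamma(\bfV_2, \bfV_1) = 0$, which you do not state but which is available for Dynkin quivers after ordering the indecomposable summands suitably (Proposition~\ref{prop:reduction-indec}). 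The mechanism that makes Proposition~\ref{prop:Ext=0-bundle} work is the paper's new ingredient and is not at all formal: flags in $\bfV$ are reinterpreted as subrepresentations of $\Phi(\bfV)$ for an extended quiver $\eG$, and the fiber over a pair of flags is $\Hom_{\eG}(\bfW', \Phi(\bfV)/\bfV')$, whose dimension is constant only because $\Ext^1_Q(\bfW', \Phi(\bfV)/\bfV') = 0$ in the subcategory $\Repd$; this in turn needs the Ext-vanishing lemmas of Section~\ref{subs-Ext-vanish}, which are delicate because $\Repd$, unlike $\Rep(\Gamma)$ or $\Rep(\eG)$, is not hereditary. So the outline is right, but without this input the ``affine bundle'' claim for the strata is unsupported; the surrounding cohomological scaffolding (long exact sequences for the $\alpha$-filtration, K\"unneth, diagonal decomposition implying vanishing of odd integral cohomology) is indeed routine once that is granted.
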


This paper is inspired by \cite{IEFR}, where these theorems are proved for quiver Grassmannians. Moreover, for quiver Grassmannians the analogue of Theorem \ref{thm:cell-intro} also holds for type $E$ and for affine quivers. The present paper gives generalizations of some results from \cite{IEFR} to flag versions. These generalizations are not straightforward, they need some new ideas that we explain below. 

Let us first summarize the idea of the proof in \cite{IEFR} for usual quiver Grassmannians. Let $\bfV$ be a representation of $\Gamma$ with dimension vector $v$ and let $v'$ be a dimension vector. The quiver Grassmannian is the following variety 
$$
\calG_{v'}(\bfV)=\{\bfV'\subset \bfV;~\dimI\bfV'=v', ~\mbox{$\bfV'$ is a subrepresentation of $\bfV$}\}.
$$
Let $0\to\bfV\to\bfU\to\bfW\to 0$ be a short exact sequence of representations of $\Gamma$. Set 
$$
\calG_{v',w'}=\{\bfU'\in \calG_{v'+w'}(\bfU);~\dimI(\bfU'\cap \bfV)=v'\}.
$$
The key step of their proof is the following proposition, see \cite[Thm.~26]{IEFR} and its proof.
\begin{prop}
\label{prop:key-recall}
If $\Ext^1_\Gamma(\bfW,\bfV)=0$, then
$$
\calG_{v',w'}(\bfU)\to\calG_{v'}(\bfV)\times \calG_{w'}(\bfW)
$$ 
is a vector bundle.
\end{prop}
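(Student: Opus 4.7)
The natural map $\phi\colon \calG_{v',w'}(\bfU) \to \calG_{v'}(\bfV) \times \calG_{w'}(\bfW)$ sends $\bfU' \mapsto (\bfU' \cap \bfV,\, p(\bfU'))$, where $p\colon \bfU \twoheadrightarrow \bfW$ is the projection. My approach is threefold: (i) identify each fiber of $\phi$ as a torsor under a $\Hom$-space using the hypothesis $\Ext^1_\Gamma(\bfW,\bfV)=0$, (ii) use the same hypothesis to produce a canonical global section of $\phi$, and (iii) upgrade this pointwise picture to a vector bundle by working with tautological families on the base.

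\emph{Fiberwise description.} Fix $(\bfV', \bfW')$ in the base and let $\bfU'' := p^{-1}(\bfW')$, so there is a short exact sequence $0 \to \bfV \to \bfU'' \to \bfW' \to 0$. Quotienting by $\bfV'$, elements of $\phi^{-1}(\bfV', \bfW')$ correspond bijectively to $\Gamma$-equivariant splittings of
$$
0 \to \bfV/\bfV' \to \bfU''/\bfV' \to \bfW' \to 0. \qquad (\star)
$$
The extension class of $(\star)$ in $\Ext^1_\Gamma(\bfW', \bfV/\bfV')$ is the push--pull image of $[\bfU] \in \Ext^1_\Gamma(\bfW, \bfV)$ along $\bfW'\hookrightarrow\bfW$ and $\bfV\twoheadrightarrow\bfV/\bfV'$, so the hypothesis kills it and $(\star)$ splits. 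Hence the fiber is an affine space, a torsor under $\Hom_\Gamma(\bfW', \bfV/\bfV')$. The same vanishing gives a global splitting $\sigma\colon \bfW \to \bfU$ of the original sequence; the assignment $(\bfV', \bfW') \mapsto \bfV' \oplus \sigma(\bfW')$ then defines a morphism $s$ that section-theoretically splits $\phi$.

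\emph{Globalization.} Over $B := \calG_{v'}(\bfV) \times \calG_{w'}(\bfW)$, the tautological subsheaves $\scrV'$ and $\scrW'$ assemble the fiberwise sequences $(\star)$ into a relative short exact sequence on $B$, whose relative extension class again factors through $[\bfU] = 0$ and therefore vanishes globally. Consequently $\calG_{v',w'}(\bfU)$ is a torsor on $B$ under the coherent $\Hom$-sheaf $\calH := \Hom_{\Gamma,B}(\scrW', \bfV/\scrV')$. Using the section $s$ from step (ii) to trivialize this torsor, one identifies $\calG_{v',w'}(\bfU)$ with the total space of $\calH$, so $\phi$ becomes a vector bundle once $\calH$ is known to be locally free — which follows on each connected component of $B$ from semicontinuity combined with the lower bound on $\dim\Hom_\Gamma(\bfW', \bfV/\bfV')$ coming from the Euler form $\langle \bfW', \bfV/\bfV'\rangle$ via the $\Hom$--$\Ext$ exact sequence.

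The main technical obstacle is this final step: promoting the pointwise affine-space description into a genuine algebraic vector bundle. Local freeness of $\calH$ requires that the fiberwise $\Hom$-dimension be locally constant, which is not automatic — the hypothesis $\Ext^1_\Gamma(\bfW,\bfV)=0$ is doing triple duty here (splitting the fiberwise sequences, producing the global section $s$, and controlling the $\Hom$-ranks via Euler forms), and verifying that the resulting torsor identifies scheme-theoretically with $\mathrm{Tot}(\calH)$ is the delicate point.
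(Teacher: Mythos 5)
Your approach is essentially the same one that underlies the proof in \cite[Thm.~26]{IEFR} (which the paper cites for this statement) and the paper's own flag-generalization, Proposition~\ref{prop:Ext=0-bundle}: fix a splitting $\bfU\simeq\bfV\oplus\bfW$, identify the fiber over $(\bfV',\bfW')$ with $\Hom_\Gamma(\bfW',\bfV/\bfV')$, and show these Hom-spaces assemble into a vector bundle over the base $B=\calG_{v'}(\bfV)\times\calG_{w'}(\bfW)$. The fiberwise analysis and the observation that $\Ext^1_\Gamma(\bfW,\bfV)=0$ forces $\Ext^1_\Gamma(\bfW',\bfV/\bfV')=0$ (pushing along $\bfV\twoheadrightarrow\bfV/\bfV'$ and pulling back along $\bfW'\hookrightarrow\bfW$, using that $\bbC\Gamma$ is hereditary) are exactly right.

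Where you differ is in the final step, which you flag as ``the delicate point'': you propose getting local freeness of $\calH=\Hom_{\Gamma,B}(\scrW',\bfV/\scrV')$ from upper semicontinuity of $\dim\Hom$ together with the lower bound $\langle w',v-v'\rangle$ from the Euler form. That works, but it is a detour. The argument in IEFR (and in Proposition~\ref{prop:Ext=0-bundle}) is more direct: apply the standard four-term sequence of Corollary~\ref{cor:dimHom-Ext0},
$$
0\to\Hom_\Gamma(\bfW',\bfV/\bfV')\to\bigoplus_{i\in I}\Hom(\bfW'_i,(\bfV/\bfV')_i)\to\bigoplus_{h\in H}\Hom(\bfW'_{h'},(\bfV/\bfV')_{h''})\to\Ext^1_\Gamma(\bfW',\bfV/\bfV')\to 0.
$$
The middle and right-hand terms are built purely from linear maps between tautological sub- and quotient-bundles of trivial bundles on $B$, so they are manifestly vector bundles. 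The $\Ext^1$ term vanishes identically over $B$ by the hypothesis, so $\calH$ is realized globally as the kernel of a \emph{surjective} morphism of vector bundles, hence is itself a subbundle --- no separate semicontinuity argument needed, and local constancy of the rank is built in because the dimension vectors $v'$, $w'$ are fixed. Likewise, this kernel presentation simultaneously identifies the total space of $\calH$ with $\calG_{v',w'}(\bfU)$ inside the product of Grassmannians, so the scheme-theoretic identification with $\mathrm{Tot}(\calH)$ that you worry about comes for free. In short, your plan is sound and would close, but replacing the semicontinuity step with the kernel-of-surjection presentation removes the obstacle you identified.
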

For a Dynkin quiver, its indecomposable representations can be ordered in such a way that there are no extensions in one direction. So, Proposition \ref{prop:key-recall} reduces the statement to indecomposable representations. This allows us to prove Theorems \ref{thm:cell-intro} and \ref{thm:odd0-intro} for quiver Grassmannians in \cite{IEFR}.

\medskip
Now, let us explain how the present paper generalizes this approach for flag versions. We need to find a generalization of Proposition \ref{prop:key-recall} for flags. We start from the following observation. For each representation $\bfV$ of $\Gamma$ and each $\bfv$, the variety $\calF_\bfv(\bfV)$ can be seen as a quiver Grassmannian for some bigger quiver $\eG$ and some representation $\Phi(\bfV)$ of $\eG$. However, we cannot deduce our result directly from \cite{IEFR} because the quiver $\eG$ has no reason to be Dynkin. See also \cite[Thm.~3.27]{GLS} for a different approach realizing quiver flag varieties as quiver Grassmannians.

One more obstruction is that the natural functor $\Phi\colon \Rep(\Gamma)\to\Rep(\eG)$ does not preserve extensions. To fix this difficulty, we introduce a full subcategory $\Rep^0(\eG)$ of $\Rep(\eG)$ such that the image of $\Phi$ is in $\Rep^0(\eG)$ and  such that the functor $\Phi\colon \Rep(\Gamma)\to \Rep^0(\eG)$ preserves extensions. However, this creates a new obstruction: the category $\Rep^0(\Gamma)$ may have nonzero second extensions (while the category $\Rep(\eG)$ has no second extensions). The absence of second extensions in the category $\Rep(\Gamma)$ was an important point in the proof of Proposition \ref{prop:key-recall}. To overcome this problem, we prove some $\Ext$-vanishing properties of the category $\Rep^0(\eG)$ in Section \ref{subs-Ext-vanish}. This allows us to get an analogue of Proposition \ref{prop:key-recall}, see Proposition \ref{prop:Ext=0-bundle}.

This allows us to reduce the proof of Theorems \ref{thm:cell-intro} and \ref{thm:odd0-intro} to indecomposable representations. For types $A$ and $D$, the indecomposable representations are very easy to describe. So Theorem \ref{thm:cell-intro} can be done by hand for indecomposables. Type $E$ is more complicated. We don't know how to check Theorem \ref{thm:cell-intro} for indecomposables in this case. Note, however, that in type $E$ the number of cases to check is finite (we have a finite number of indecomposable representations and a finite number of flag types for each indecomposable representation).   

In type $E$, we manage to prove a weaker statement: Theorem \ref{thm:odd0-intro}. It is also enough to check it only for indecomposable representations. We do this in more generality. We prove the following result.
\begin{thm}
Let $\Gamma$ be an arbitrary quiver (without oriented cycles). Assume that $\bfV\in\Rep(\Gamma)$ satisfies $\Ext^1_\Gamma(\bfV,\bfV)=0$. Then $\calF_\bfv(\bfV)$ is either empty or has a diagonal decomposition. In particular, for every ring $\bfk$,  we have $H^{\rm odd}(\calF_\bfv(\bfV),\bfk)=0$.
\end{thm}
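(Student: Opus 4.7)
The plan is to extend the strategy of \cite{IEFR} from quiver Grassmannians to flag versions by substituting the flag-version Proposition~\ref{prop:Ext=0-bundle} for Proposition~\ref{prop:key-recall}, and inducting on the number $n$ of indecomposable summands of $\bfV$.

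First I would write $\bfV = \bfV_1 \oplus \cdots \oplus \bfV_n$ with each $\bfV_i$ indecomposable. The hypothesis $\Ext^1_\Gamma(\bfV,\bfV)=0$ together with biadditivity of $\Ext^1$ yields $\Ext^1_\Gamma(\bfV_i,\bfV_j)=0$ for all $i,j$; in particular every $\bfV_i$ is exceptional, and the hypothesis of Proposition~\ref{prop:Ext=0-bundle} is satisfied in both directions for any grouping of the summands into two blocks. For the inductive step with $n \geqslant 2$, set $\bfV' := \bfV_n$ and $\bfV'' := \bfV_1 \oplus \cdots \oplus \bfV_{n-1}$ and apply Proposition~\ref{prop:Ext=0-bundle} to the split short exact sequence $0 \to \bfV'' \to \bfV \to \bfV' \to 0$. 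This produces a stratification
\[
\calF_\bfv(\bfV) = \bigsqcup_{\bfv''+\bfv'=\bfv} \calF_{\bfv'',\bfv'}
\]
with each stratum a Zariski-locally-trivial vector bundle over $\calF_{\bfv''}(\bfV'') \times \calF_{\bfv'}(\bfV')$. By the induction hypothesis applied to $\bfV''$, the base $\calF_{\bfv''}(\bfV'')$ admits a diagonal decomposition, and each stratum inherits one as a vector bundle over a product of diagonally decomposed varieties. A standard upper-semicontinuity check on the intersection dimensions confirms that closures of strata are unions of strata, giving the diagonal decomposition of $\calF_\bfv(\bfV)$.

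The main obstacle is the base case $n = 1$, where $\bfV$ is indecomposable rigid and admits no nontrivial direct-sum decomposition to feed into Proposition~\ref{prop:Ext=0-bundle}, so the bundle induction does not bottom out on its own. The plan is to dispatch this case separately, for instance via a Bialynicki--Birula decomposition coming from a generic one-parameter subgroup acting on the underlying vertex spaces of $\bfV$ (using rigidity of $\bfV$ to ensure that the fixed locus on $\calF_\bfv(\bfV)$ consists of isolated flags), or by analyzing $\mathrm{Aut}(\bfV)$-orbits on $\calF_\bfv(\bfV)$ and using rigidity to establish that there are finitely many orbits, each an affine space. Once the base case is handled, the conclusion $H^{\rm odd}(\calF_\bfv(\bfV),\bbZ)=0$ follows automatically from the diagonal decomposition: the class of varieties with trivial odd integral cohomology is closed under Zariski-locally-trivial vector bundles (via the Leray spectral sequence) and under $\alpha$-partitions (via the long exact sequences of open-closed decompositions), so the vanishing propagates from the base case through the inductive construction to $\calF_\bfv(\bfV)$.
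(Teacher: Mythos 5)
Your proposal has a genuine gap that cannot be repaired along the lines you sketch: the base case $n=1$, where $\bfV$ is indecomposable and rigid. This is not a minor ``obstacle to be dispatched separately'' --- it is exactly the hard content of the theorem. The two escape routes you mention (a Bialynicki--Birula decomposition with isolated fixed flags, or an $\mathrm{Aut}(\bfV)$-orbit analysis producing finitely many affine orbits) are precisely what Remark~\ref{rk:typeE} says is not known for indecomposable representations in type $E$; the paper explicitly gives up on proving Theorem~\ref{thm:cell-intro} (the $\alpha$-partition statement) for indecomposables of type $E$, and your induction would collapse into that unsolved problem. A second, subsidiary issue: it is not clear that a diagonal decomposition passes along an $\alpha$-partition. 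What is known (and used in \cite{IEFR}) is that the weaker property (S) passes along $\alpha$-partitions, but the diagonal class lives in $A^*(X\times X)$ and there is no obvious patching argument showing that the diagonal class of the total space is a sum of products of pullbacks just because this holds stratum by stratum. Your ``standard upper-semicontinuity check'' therefore does not deliver the advertised conclusion.

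The paper's proof takes a quite different and much more direct route, and the key realization is that no reduction to indecomposables is needed. Since $\Ext^1_\Gamma(\bfV,\bfV)=0$, one applies Proposition~\ref{prop:Ext=0-bundle} with $\bfW=\bfV$ (i.e., to the split sequence $0\to\bfV\to\bfV\oplus\bfV\to\bfV\to 0$ and the flag types $\bfw=\bfv$), obtaining a vector bundle $\calE$ on $\calF_\bfv(\bfV)\times\calF_\bfv(\bfV)$ of rank $\dim\calF_\bfv(\bfV)$ whose fiber over $(\bfV',\bfV'')$ is $\Hom_\eG(\bfV',\Phi(\bfV)/\bfV'')$. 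The composition $\bfV'\hookrightarrow\Phi(\bfV)\twoheadrightarrow\Phi(\bfV)/\bfV''$ is a section of $\calE$ vanishing precisely on the diagonal, so $[\Delta]=c_{\rm top}(\calE)$ by \cite[Prop.~14.1, Ex.~14.1.1]{Ful}. One then shows $c_{\rm top}(\calE)$ lies in the subring $D$ of classes of the form $\sum_j \pi_1^*\alpha_j\cdot\pi_2^*\beta_j$ by expressing $\calE$ through the short exact sequences (\ref{eq:ses-bundle-final}) and (\ref{eq:ses-Hom-quiver}) and invoking the Lascoux/Fulton formulas for Chern classes of $\Hom$-bundles. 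This entirely bypasses indecomposables and any stratification. If you want a correct proof, you should abandon the induction on summands and instead apply Proposition~\ref{prop:Ext=0-bundle} symmetrically as above; the rigidity hypothesis is exactly what lets you feed $(\bfV,\bfV)$ into that proposition.
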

The theorem above generalizes a similar result from \cite{IEFR} about quiver Grassmannians. 

\bigskip
It is explained in \cite[Prop. A.1]{MakMin} how to extend Theorem \ref{thm:odd0-intro} to the Kronecker quiver $\bullet\rightrightarrows \bullet$.
	
After the preprint version of this paper was released, Xiaoxiang Zhou informed the author that the preprint \cite{Zhou} extends Theorem \ref{thm:cell-intro} to type $E$. Upon publication, this will confirm that flag versions of quiver Grassmannians
for Dynkin quivers ($A$, $D$, $E$) always have affine pavings (or they are empty). Additionally, \cite{Zhou} presents some results in the affine case. The preprint \cite{Zhou} uses methods from the present paper and from \cite{IEFR}.

\section{Flag versions of quiver Grassmannians}

We assume that all quivers have a finite number of vertices and arrows and have no oriented cycles. We also assume that all representations of quivers are finite dimensional and are over $\bbC$.

For a $\bbC$-algebra $A$ we denote by $\mod(A)$ the category of finite dimensional (over $\bbC$) left $A$-modules.

For integers $a$ and $b$ such that $a<b$ we set ${[a;b]=\{a,a+1,\ldots,b-1,b\}}$.

\subsection{Quivers}
Let $\Gamma=(I,H)$ be a quiver. We denote by $I$ and $H$ the sets of its vertices and arrows respectively. For each arrow $h\in H$ we write $s(h)$ and $t(h)$ for its source and target respectively.

\begin{df}
A \emph{dimension vector} $v$ for $\Gamma$ is a collection of positive integers $(v_i)_{i\in I}$. A \emph{representation} $\bfV$ of $\Gamma$ is a collection of finite dimensional complex vector spaces $\bfV_i$ for $i\in I$ and  a collection of linear maps $\bfV_{s(h)}\to \bfV_{t(h)}$ for each $h\in H$. We denote by $\Rep(\Gamma)$ the category of (finite dimensional) representations of $\Gamma$. We say that $v$ is the dimension vector of $\bfV$ if we have $\dim\bfV_i=v_i$ for each $i\in I$. In this case we write $\dimI \bfV=v$.
\end{df}
\subsection{Extended quiver}
Let $\Gamma=(I,H)$ be a quiver. Fix an integer $d>0$.

\begin{df}
The extended quiver $\eG=(\widehat I,\widehat H)$ is the quiver obtained from $\Gamma$ in the following way. The vertex set $\widehat I$ of $\eG$ is a union of $d$ copies of the vertex set $I$ of $\Gamma$, i.e., we have $\widehat I=I\times [1;d]$. The quiver $\eG$ has the following arrows:
\begin{itemize}
\item an arrow $(s(h),r)\to (t(h),r)$ for each $h\in H$ and each $r\in [1;d]$,
\item an arrow $(i,r)\to (i,r+1)$ for each $i\in I$ and $r\in [1;d-1]$.
\end{itemize}
\end{df}

\subsection{The functor $\Phi$}
Let $\Rep^0(\eG)$ be the full category of $\Rep(\eG)$ containing the objects $\bfV\in\Rep(\eG)$ that satisfy the following condition: for each $h\in H$ and each $r\in [1;d-1]$, the following diagram is commutative.
$$
\begin{CD}
\bfV_{(s(h),r)}   @>>> \bfV_{(t(h),r)}\\
@VVV                                             @VVV\\
\bfV_{(s(h),r+1)}   @>>>\bfV_{(t(h),r+1)}\\
\end{CD}
$$

Fix $r\in [1;d]$. Consider the following functor $\Phi\colon \Rep(\Gamma)\to \Rep^0(\eG)$. It is defined on objects in the following way. 
\begin{itemize}
\item For each $i\in I$ and $r\in [1;d]$, we have $\Phi(\bfV)_{(i,r)}=\bfV_i$. 
\item For each $h\in H$ and each $t\in [1;d]$, the map $\Phi(\bfV)_{(s(h),r)}\to \Phi(\bfV)_{(t(h),r)}$ is defined as $\bfV_{s(h)}\to \bfV_{t(h)}$.
\item  For each $i\in I$ and each $r\in [1;d-1]$, the map $\Phi(\bfV)_{(i,r)}\to \Phi(\bfV)_{(i,r+1)}$ is $\Id_{\bfV_i}$.
\end{itemize}
It does the following on morphisms. Each morphism $f\colon\bfV\to \bfW$ is sent to the morphism $\Phi(f)\colon \Phi(\bfV)\to \Phi(\bfW)$ such that for each $i\in I$ and $r\in [1;d]$ the component $\Phi(\bfV)_{(i,r)}\to \Phi(\bfW)_{(i,r)}$ of $\Phi(f)$ is just the component $\bfV_{i}\to\bfW_{i}$ of $f$.

\begin{lem}
\label{lem:Phi-ff}
The functor $\Phi$ is exact and fully faithful. 
\end{lem}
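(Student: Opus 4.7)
The plan is to handle exactness and full faithfulness separately, both by reducing everything to vertex-wise considerations.

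For exactness, I would use the fact that a sequence in $\Rep(\eG)$ is exact if and only if it is exact at every vertex $(i,r) \in \widehat I$. Given a short exact sequence $0 \to \bfV \to \bfV' \to \bfV'' \to 0$ in $\Rep(\Gamma)$, exactness at each $i \in I$ means that $0 \to \bfV_i \to \bfV'_i \to \bfV''_i \to 0$ is a short exact sequence of vector spaces. Since $\Phi$ by definition satisfies $\Phi(\bfV)_{(i,r)} = \bfV_i$ for every $r$, the sequence obtained after applying $\Phi$ coincides with the original sequence at every vertex $(i,r)$, so it is exact there. This handles exactness with essentially no work.

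For full faithfulness, the key observation is that the vertical identity arrows in $\Phi(\bfV)$ rigidify morphisms. Concretely, given $f \in \Hom_{\eG}(\Phi(\bfV), \Phi(\bfW))$, consisting of components $f_{(i,r)} \colon \bfV_i \to \bfW_i$, the compatibility with the arrow $(i,r) \to (i,r+1)$ (which is $\Id$ on both sides) forces $f_{(i,r+1)} = f_{(i,r)}$ for all $r \in [1;d-1]$. Hence all components along a given $i$-column coincide with a single map $f_i \colon \bfV_i \to \bfW_i$. The remaining compatibilities, namely with the horizontal arrows $(i,r) \to (j,r)$ corresponding to $h \colon i \to j$ in $\Gamma$, are then exactly the commutativity of
\[
\begin{CD}
\bfV_i @>>> \bfV_j\\
@Vf_iVV @VVf_jV\\
\bfW_i @>>> \bfW_j
\end{CD}
\]
which says that $(f_i)_{i \in I}$ is a morphism in $\Rep(\Gamma)$. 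This constructs a two-sided inverse to the map $\Hom_\Gamma(\bfV,\bfW) \to \Hom_{\eG}(\Phi(\bfV),\Phi(\bfW))$ induced by $\Phi$, giving both faithfulness and fullness simultaneously.

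There is no real obstacle here; the lemma is a structural sanity check whose content is that the identity maps on vertical arrows of $\Phi(\bfV)$ encode precisely the data needed to reconstruct $\Gamma$-morphisms from $\eG$-morphisms. The only thing worth double-checking is that the image of $\Phi$ actually lies in $\Rep^0(\eG)$, which is immediate: in $\Phi(\bfV)$ the vertical arrows are identities, so the square defining $\Rep^0(\eG)$ reduces to the statement that the horizontal map $\bfV_i \to \bfV_j$ equals itself.
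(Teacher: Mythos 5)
Your proof is correct and takes essentially the same approach as the paper: both arguments exploit the vertical identity arrows to force all components $f_{(i,r)}$ in a given column to agree, and then observe that the horizontal compatibilities reproduce the data of a $\Gamma$-morphism. You are marginally more thorough in that you spell out exactness, package full faithfulness as a single two-sided inverse construction, and explicitly verify the image of $\Phi$ lands in $\Rep^0(\eG)$, but the underlying idea is identical.
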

\begin{proof}
It is obvious from the construction that the functor is exact. It is also clear that the functor $\Phi$ is injective on morphisms. Let us prove that it is also surjective on morphisms.  

Consider a morphism $\phi\in \Hom_\eG(\Phi(\bfV),\Phi(\bfW))$. By definition, for each $i\in I$ and $r\in[1;d-1]$, we have the following commutative diagram.
$$
\begin{CD}
\Phi(\bfV)_{(i,r)}   @>{\phi_{(i,r)}}>> \Phi(\bfW)_{(i,r)}\\
@VVV                      @VVV\\
\Phi(\bfV)_{(i,r+1)}   @>{\phi_{(i,r+1)}}>> \Phi(\bfW)_{(i,r+1)}\\
\end{CD}
$$
After the identification $\bfV_i=\Phi(\bfV)_{(i,r)}=\Phi(\bfV)_{(i,r+1)}$ and $\bfW_i=\Phi(\bfW)_{(i,r)}=\Phi(\bfW)_{(i,r+1)}$, we get the diagram 
$$
\begin{CD}
\bfV_{i}   @>{\phi_{(i,r)}}>> \bfW_{i}\\
@V{\Id_{\bfV_i}}VV                      @V{\Id_{\bfW_i}}VV\\
\bfV_{i}   @>{\phi_{(i,r+1)}}>> \bfW_{i}\\
\end{CD}
$$
This shows that after the identification, the maps $\phi_{(i,r)}$ and $\phi_{(i,r+1)}$ are the same. This proves that $\phi$ is in the image of $\Phi$.
\end{proof}

\subsection{The functor $\Phi$ as a bimodule}
The category $\Rep(\Gamma)$ is equivalent to $\mod(\bbC\Gamma)$, where $\bbC\Gamma$ is the path algebra for the quiver $\Gamma$. Similarly, we have an equivalence of categories $\Rep(\eG)\simeq \mod(\bbC\eG)$. The subcategory $\Rep^0(\eG)$ of $\Rep(\eG)$ is equivalent to $\mod(Q)$, where $Q=\bbC\eG/J$ is the quotient of the path algebra $\bbC\eG$ by the ideal $J$ generated by the commutativity relations
$$
\begin{CD}
(s(h),r)   @>>> (t(h),r)\\
@VVV                                             @VVV\\
(s(h),r+1)   @>>>(t(h),r+1)\\
\end{CD}
$$ 
for each $h\in H$ and each $r\in [1,d-1]$. 

Let us give another description of $Q$. Let $L_d$ be the equioriented quiver of type $A_d$. More precisely, the quiver has $d$ vertices $1,2,\ldots,d$ and an arrow $r\to (r+1)$ for each $r\in[1;d-1]$. The set of vertices of the quiver $\eG$ can be considered as a direct product of sets of vertices of $\Gamma$ and $L_d$. It is easy to see that we have an identification $Q\simeq\bbC\Gamma\otimes \bbC L_d$. The isomorphism above allows us to consider $\bbC\Gamma$ and $\bbC L_d$ as subalgebras of $Q$.

Since $Q$ is a quotient of the path algebra $\bbC \eG$, it has idempotents $e_{(i,r)}$ for $i\in I$, $r\in [1;d]$ defined as the images of the standard idempotents of the path algebra. On the other hand, the algebra $\bbC\Gamma$ has idempotents $e_i$ for $i\in I$ and the algebra $\bbC L_d$ has idempotents $e_r$, $r\in [1;d]$. Then the isomorphism $Q\simeq\bbC\Gamma\otimes \bbC L_d$ identifies $e_{(i,r)}$ with $e_i\otimes e_r$.

Now, we describe the functor $\Phi\colon \mod(\bbC\Gamma)\to \mod(Q)$ in terms of bimodules. Abusing the notation, denote by $e_r$ the idempotent $1\otimes e_r\in Q$, in other words we have $e_r=\sum_{i\in I}e_{(i,r)}\in Q$. The following lemma is obvious. 
\begin{lem}
For each $\bfV\in\mod(\bbC\Gamma)$, we have $\Phi(\bfV)=Qe_1\otimes_{\bbC\Gamma}\bfV$.
\end{lem}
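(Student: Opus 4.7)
The plan is essentially to unpack the definitions. First I would clarify that the idempotent $e_1 \in Q$ appearing in the statement is the element $1 \otimes e_1 = \sum_{i \in I} e_{(i,1)}$ that picks out the first ``slice'' of $\eG$; under the decomposition $Q = \bbC\Gamma \otimes \bbC L_d$ one computes $e_1 Q e_1 = \bbC\Gamma \otimes (e_1 \bbC L_d e_1) = \bbC\Gamma \otimes \bbC = \bbC\Gamma$, since in the linear quiver $L_d = (1 \to 2 \to \cdots \to d)$ the only path from $1$ to $1$ is the trivial one. This equips $Qe_1$ with a canonical $(Q, \bbC\Gamma)$-bimodule structure, so that $Qe_1 \otimes_{\bbC\Gamma} \bfV$ is a well-defined left $Q$-module, i.e.\ an object of $\mod(Q) \simeq \Rep(\eG)$.

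Next I would compute $Qe_1 = \bbC\Gamma \otimes \bbC L_d e_1$ more explicitly. For each $r \in [1;d]$ there is a unique path $p_r$ from $1$ to $r$ in $L_d$, so $e_r \bbC L_d e_1 = \bbC p_r$ is one-dimensional. It follows that, for each vertex $(i,r) \in \widehat I$,
\[
e_{(i,r)} Q e_1 = (e_i \bbC\Gamma) \otimes \bbC p_r,
\]
and hence
\[
e_{(i,r)}\bigl(Qe_1 \otimes_{\bbC\Gamma} \bfV\bigr) = \bigl((e_i \bbC\Gamma) \otimes \bbC p_r\bigr) \otimes_{\bbC\Gamma} \bfV \cong e_i \bfV = \bfV_i = \Phi(\bfV)_{(i,r)}.
\]

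Finally I would verify that this vertex-by-vertex identification intertwines the action of the arrows of $\eG$. For a horizontal arrow $(i,r) \to (j,r)$ arising from an arrow $h \colon i \to j$ of $\Gamma$, left multiplication by $h \otimes e_r$ sends $a \otimes p_r$ to $(ha) \otimes p_r$, which after tensoring with $\bfV$ becomes exactly the action of $h$ from $\bfV_i$ to $\bfV_j$, matching the construction of $\Phi(\bfV)$. For a vertical arrow $(i,r) \to (i,r+1)$, left multiplication by $e_i \otimes (r \to r+1)$ sends $a \otimes p_r$ to $a \otimes p_{r+1}$, which becomes the identity map $\bfV_i \to \bfV_i$ under the identifications above, again matching $\Phi(\bfV)$. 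There is no substantive obstacle here: the whole verification is just a careful unpacking of the tensor-product description $Q = \bbC\Gamma \otimes \bbC L_d$, which is precisely why the lemma is declared ``obvious''.
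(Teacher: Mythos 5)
Your proof is correct and fills in exactly the details that the paper leaves implicit when it declares the lemma ``obvious'' right after introducing the decomposition $Q = \bbC\Gamma \otimes \bbC L_d$. The vertex-by-vertex computation via $e_{(i,r)}Qe_1 \cong e_i\bbC\Gamma \otimes \bbC p_r$ and the arrow-action check are precisely the intended unpacking, so this is the same approach as the paper.
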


The description of the functor $\Phi$ above shows the following.
\begin{lem}
\label{lem:proj-to-proj}
The functor $\Phi\colon \Rep(\Gamma)\to\Rep^0(\eG)$ sends projective objects to projective objects and it also sends  injective objects to injective objects.
\end{lem}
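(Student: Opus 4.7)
The plan is to exploit the bimodule description from the previous lemma: $\Phi \cong Qe_1 \otimes_{\bbC\Gamma} -$. Since this functor is additive, it is enough to check that it sends each indecomposable projective $P_i = \bbC\Gamma \cdot e_i$ (for $i \in I$) to a projective object of $\Rep^0(\eG) = \mod(Q)$.

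To compute $\Phi(P_i)$, I would invoke the standard identity $X \otimes_A A e = X e$, valid for any right $A$-module $X$ and any idempotent $e \in A$. This gives
$$
\Phi(P_i) \;=\; Qe_1 \otimes_{\bbC\Gamma} \bbC\Gamma e_i \;=\; (Qe_1)\cdot e_i.
$$
Under the algebra identification $\bbC\Gamma \cong e_1 Q e_1$ that makes $Qe_1$ a right $\bbC\Gamma$-module, the idempotent $e_i \in \bbC\Gamma$ corresponds to $e_{(i,1)} \in Q$, and therefore $\Phi(P_i) \cong Q e_{(i,1)}$. The right-hand side is tautologically the indecomposable projective $Q$-module attached to the vertex $(i,1)$.

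To pass from the $P_i$ to arbitrary projectives, I would use that any projective $P \in \mod(\bbC\Gamma)$ is a direct summand of a finite direct sum of copies of the $P_i$, together with the fact that the additive functor $\Phi$ preserves direct summands (idempotents in endomorphism rings are sent to idempotents). Hence $\Phi(P)$ is a direct summand of a projective $Q$-module and is itself projective.

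There is no real obstacle: the whole argument reduces to a one-line idempotent manipulation plus the elementary fact that projectives are summands of free modules. A slightly more conceptual variant, which I would mention but not need, is that $\Phi$ admits the right adjoint $\Hom_Q(Qe_1,-) \cong e_1(-)$; this right adjoint is visibly exact, so the left adjoint $\Phi$ automatically preserves projectives.
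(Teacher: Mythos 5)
Your proof is correct and follows the same route as the paper: the paper treats the lemma as immediate from the bimodule description $\Phi \cong Qe_1 \otimes_{\bbC\Gamma} (-)$, and your argument (computing $\Phi(P_i) \cong Qe_{(i,1)}$ and using additivity, or equivalently noting that the right adjoint $e_1(-)$ is exact) is just a spelled-out version of why that observation makes the statement obvious.
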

\begin{proof}
	Let us abbreviate $A=\bbC\Gamma$ and $B=\bbC L_d$. We have $Q\simeq A\otimes B$. The $(Q,A)$-bimodule $Qe_1$ is in fact isomorphic to $A\otimes Be_1$. So, the functor $\Phi=Qe_1\otimes_A \bullet$ takes an $A$-module $M$ to the $Q$-module $M\otimes Be_1$.
	
	Let us check that $\Phi$ sends projectives to projectives. All indecomposable projective objects in $\mod(A)$ are direct summands of $A$. So, it is enough to check that $\Phi(A)$ is a projective $Q$-module. We have $\Phi(A)\simeq Qe_1$, it is obviously projective.
	
	Let us check that $\Phi$ sends injectives to injectives. All indecomposable injective objects in $\mod(A)$ are direct summands of $A^*$, where $A^*=\Hom_\bbC(A,\bbC)$ is the left $A$-module whose module structure is induced from the right $A$-module structure on $A$. So, it is enough to check that the $Q$-module $\Phi(A^*)$ is injective. We have $\Phi(A^*)=A^*\otimes Be_1$. This $Q$-module is injective because the $B$-module $Be_1$ is injective (because we have an isomorphism of left $B$-modules $B\simeq B^*$).

\end{proof}

Let us write $\Ext^i_\Gamma$ for the $i$th extension functor in the category $\Rep(\Gamma)$. We will also often need the extension functor in the category $\Repd$ (not in $\Rep(\eG)$). We will denote this extension functor $\Ext^i_Q$.

\begin{coro}
\label{coro:proj-inj-dim}
For each $\bfV, \bfW\in \Rep(\Gamma)$ and each $i\geqslant 0$, we have an isomorphism
$$
\Ext^i_Q(\Phi(\bfV),\Phi(\bfW))\simeq\Ext^i_\Gamma(\bfV,\bfW).
$$
Moreover, the object $\Phi(\bfV)\in \Repd$ has both projective and injective dimension at most one.
\end{coro}
\begin{proof}
This follows from Lemmas \ref{lem:Phi-ff} and \ref{lem:proj-to-proj}.
\end{proof}

\subsection{$\Ext$-vanishing properties}
\label{subs-Ext-vanish}

For each representation $\bfU\in\Repd=\mod(Q)$ and $r\in[1;d]$, set $\bfU_r=e_r\bfU$. We can consider $\bfU_r$ as a representation of $\Gamma$.

\begin{lem} 
\label{lem:Ext0,W',Phi(V)}
Assume $\bfV,\bfW\in \Rep(\Gamma)$ are such that $\Ext^1_\Gamma(\bfW,\bfV)=0$.
For each subrepresentation $\bfW'\subset \Phi(\bfW)$, we have $\Ext^1_Q(\bfW',\Phi(\bfV))=0$.
\end{lem}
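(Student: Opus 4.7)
The strategy is to resolve $\bfW'$ by a two-term complex in $\Rep^0(\eG)$ whose entries are direct sums of modules of the form $\Phi_r(\bfX)$ with $\bfX$ a subrepresentation of $\bfW$, then identify $\Ext^1_Q(\bfW',\Phi(\bfV))$ with the cokernel of an elementary map on Hom-spaces.

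I would start by generalising the functor $\Phi$ to a family $\Phi_r\colon \Rep(\Gamma)\to\Rep^0(\eG)$ for $r\in[1;d]$, setting $\Phi_r(\bfX)=Qe_r\otimes_{\bbC\Gamma}\bfX$. Concretely, $\Phi_r(\bfX)$ has layer $s$ equal to $\bfX$ for $s\geq r$ and zero otherwise, with identity vertical maps; note $\Phi_1=\Phi$. Exactly as in Lemma \ref{lem:proj-to-proj}, each $\Phi_r$ is exact and sends projectives to projectives (since $\Phi_r(\bbC\Gamma e_i)=Qe_{(i,r)}$). Combined with the adjunction $\Hom_Q(\Phi_r(\bfX),M)=\Hom_\Gamma(\bfX,e_r M)$, this yields $\Ext^i_Q(\Phi_r(\bfX),\Phi(\bfV))\simeq\Ext^i_\Gamma(\bfX,\bfV)$ for every $i\geq 0$ and every $\bfX\in\Rep(\Gamma)$.

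Next I would set $\bfW'_r:=e_r\bfW'$, a subrepresentation of $e_r\Phi(\bfW)=\bfW$; since the vertical maps of $\Phi(\bfW)$ are identities, these form a flag $\bfW'_1\subset\bfW'_2\subset\cdots\subset\bfW'_d\subset\bfW$. I would then construct the short exact sequence
$$
0\longrightarrow \bigoplus_{r=1}^{d-1}\Phi_{r+1}(\bfW'_r)\xrightarrow{\ \alpha\ } \bigoplus_{r=1}^d \Phi_r(\bfW'_r)\xrightarrow{\ \beta\ } \bfW'\longrightarrow 0,
$$
where $\beta$ sums the inclusions $\Phi_r(\bfW'_r)\hookrightarrow\bfW'$ (valid because $\bfW'_r\subset\bfW'_s$ at layer $s\geq r$), and $\alpha$ sends the $r$-th summand via the inclusion $\Phi_{r+1}(\bfW'_r)\hookrightarrow\Phi_{r+1}(\bfW'_{r+1})$ (from $\bfW'_r\subset\bfW'_{r+1}$) minus the inclusion $\Phi_{r+1}(\bfW'_r)\hookrightarrow\Phi_r(\bfW'_r)$. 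Exactness is checked layer-by-layer: at layer $s$ the sequence reduces to $0\to\bigoplus_{r<s}\bfW'_r\to\bigoplus_{r\leq s}\bfW'_r\to\bfW'_s\to 0$, the kernel of the summation map. Applying $\Hom_Q(-,\Phi(\bfV))$ and using that each $\bfW'_r$ is a subrepresentation of $\bfW$ and $\bbC\Gamma$ is hereditary (so $\Ext^1_\Gamma(\bfW'_r,\bfV)=0$), the long exact sequence identifies
$$
\Ext^1_Q(\bfW',\Phi(\bfV))=\mathrm{coker}\Bigl(\alpha^*\colon \bigoplus_{r=1}^d\Hom_\Gamma(\bfW'_r,\bfV)\to\bigoplus_{r=1}^{d-1}\Hom_\Gamma(\bfW'_r,\bfV),\ (\phi_r)\mapsto\bigl(\phi_{r+1}|_{\bfW'_r}-\phi_r\bigr)\Bigr).
$$

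Finally I would show $\alpha^*$ is surjective by a backward recursion: given a target $(\psi_r)_{r=1}^{d-1}$, set $\phi_d=0$ and define $\phi_r:=\phi_{r+1}|_{\bfW'_r}-\psi_r$ for $r=d-1,\ldots,1$; then $\alpha^*(\phi)=\psi$ by construction. The delicate point is the choice of resolution: a naive filtration of $\bfW'$ by subrepresentations with successive quotients $\Phi_k(\bfW'_k/\bfW'_{k-1})$ would require $\Ext^1_\Gamma(\bfW'_k/\bfW'_{k-1},\bfV)=0$, which can fail since these quotients need not be subrepresentations of $\bfW$. Resolving instead by $\Phi_r$ of the $\bfW'_r$ themselves trades that extension obstruction for the cokernel of $\alpha^*$, and the key observation that makes the proof go through is that this cokernel vanishes automatically via a one-sided recursion, with no condition on successive subquotients needed.
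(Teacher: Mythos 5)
Your proof is correct, and it is genuinely different in packaging from the paper's, though both ultimately rest on the same two facts: $\Ext^1_\Gamma(\bfW'_r,\bfV)=0$ for each layer $\bfW'_r\subset\bfW$ (heredity of $\bbC\Gamma$ plus the hypothesis), and a one-sided (top-down) recursion. The paper argues directly with extensions: given $0\to\Phi(\bfV)\to\bfU\to\bfW'\to 0$, it splits each layer $0\to\bfV\to\bfU_r\to\bfW'_r\to 0$ in $\Rep(\Gamma)$, then shows by descending induction on $r$ that a splitting $s_{r+1}$ induces a compatible splitting $s_r$, thereby producing a global splitting in $\Repd$. You instead construct the two-term resolution of $\bfW'$ by the modules $\Phi_r(\bfW'_r)=Qe_r\otimes_{\bbC\Gamma}\bfW'_r$ --- which is precisely the standard resolution the paper records later in the unnamed lemma preceding Corollary \ref{coro:long_ex_seq-Q}, applied to $\bfW'$ --- use the adjunction and the fact that $\Phi_r$ is exact and preserves projectives to identify $\Ext^{\bullet}_Q(\Phi_r(\bfW'_r),\Phi(\bfV))\simeq\Ext^{\bullet}_\Gamma(\bfW'_r,\bfV)$, kill the $\Ext^1$-terms, and reduce the claim to surjectivity of the explicit map $\alpha^*\colon(\phi_r)\mapsto(\phi_{r+1}|_{\bfW'_r}-\phi_r)$, which you solve by the same backward recursion from $\phi_d=0$. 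The two proofs are thus dual formulations of one argument: adjusting the splittings $s_r$ in the paper corresponds exactly to solving the triangular system $\phi_{r+1}|_{\bfW'_r}-\phi_r=\psi_r$ in yours. What your version buys is a cleaner separation of the two ingredients (the $\Gamma$-level $\Ext$-vanishing, and the purely combinatorial solvability of a triangular system), and it makes the role of the resolution used in Corollary \ref{coro:long_ex_seq-Q} visible already at this point; what the paper's version buys is that it avoids introducing the functors $\Phi_r$ and verifying exactness of the resolution, working entirely with concrete splittings. Your closing remark about the failure of the naive filtration by subquotients $\bfW'_k/\bfW'_{k-1}$ is a correct and worthwhile observation.
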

\begin{proof}
Consider the short exact sequence
$$
0\to\bfW'\to\Phi(\bfW)\to\Phi(\bfW)/\bfW'\to 0
$$
and apply the functor $\Hom_Q(\bullet,\Phi(\bfV))$. We get an exact sequence 
$$
\Ext^1_Q(\Phi(\bfW),\Phi(\bfV))\to \Ext^1_Q(\bfW',\Phi(\bfV))\to \Ext^2_Q(\Phi(\bfW)/\bfW',\Phi(\bfV)).
$$
The left and right terms of this exact sequence are zero by Corollary \ref{coro:proj-inj-dim}. Then the middle term is also zero.

\end{proof}

It is well-known, that each representation $\bfW\in\Rep(\Gamma)$ has a two-step projective resolution. In particular, this implies, that we have $\Ext^i_\Gamma(\bfW,\bfV)=0$ for $i\geqslant 2$. The same is true for $\Rep(\eG)$, but not necessary for $\Repd$. However, the category $\Repd$ has a weaker property. 

\begin{lem}
\label{lem:Ext3=0}
For each $i\geqslant 3$ and $\bfV,\bfW\in\Repd$, we have $\Ext^i_Q(\bfW,\bfV)=0$.
\end{lem}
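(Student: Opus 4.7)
The plan is to prove the stronger statement $\mathrm{gl.dim}(Q)\leqs 2$; the lemma then follows immediately, since $\Ext^i_Q$ vanishes above the global dimension of $Q$.

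The key input is the tensor product description $Q\simeq \bbC\Gamma\otimes_\bbC \bbC L_d$ established in the previous subsection. I would first observe that both tensor factors are path algebras of finite quivers with no relations, so both are hereditary and have global dimension at most $1$. I would then invoke the classical inequality
$$
\mathrm{gl.dim}(A\otimes_\bbC B)\leqs \mathrm{gl.dim}(A)+\mathrm{gl.dim}(B)
$$
for finite-dimensional $\bbC$-algebras $A,B$, which gives $\mathrm{gl.dim}(Q)\leqs 2$ and closes the argument.

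Rather than simply citing Cartan--Eilenberg for the inequality above, I would give a direct construction: take the standard length-$1$ projective bimodule resolutions of $\bbC\Gamma$ and of $\bbC L_d$ (coming from the vertices and arrows of the respective quivers), tensor them over $\bbC$ to obtain a length-$2$ projective bimodule resolution of $Q$ over $Q^e=Q\otimes_\bbC Q^{\mathrm{op}}$, and then apply $-\otimes_Q\bfW$ to get a length-$2$ projective resolution of any $\bfW\in\mod(Q)$.

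The main obstacle is to verify the two compatibilities needed in the tensor product step: that the $\bbC$-tensor product of the two projective bimodule resolutions is again exact (which uses flatness over the field $\bbC$), and that each term in the resulting total complex is projective over $Q^e$ and maps to a projective in $\mod(Q)$ under $-\otimes_Q\bfW$. Both verifications are routine in the present finite-dimensional setting over a field, so no genuine difficulty arises.
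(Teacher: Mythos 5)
Your proposal is correct and follows essentially the same route as the paper: the paper's proof likewise observes that $\bbC\Gamma$ and $\bbC L_d$ are hereditary and then directly asserts that the tensor product $Q=\bbC\Gamma\otimes\bbC L_d$ therefore has global dimension at most $2$. The only difference is that you spell out the justification for this inequality (tensoring the standard length-$1$ projective bimodule resolutions of the two path algebras over $\bbC$ and applying $-\otimes_Q\bfW$), whereas the paper takes the inequality $\mathrm{gl.dim}(A\otimes_\bbC B)\leqs\mathrm{gl.dim}(A)+\mathrm{gl.dim}(B)$ for granted.
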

\begin{proof}
As explained above, the algebras $\bbC\Gamma$ and $\bbC L_d$ have projective dimensions at most $1$. Then the algebra $Q=\bbC\Gamma\otimes\bbC L_d$ has projective dimension at most $2$.
\end{proof}

\begin{lem}
\label{lem:Ext2=0,W'}
For each $\bfW\in \Rep(\Gamma)$, each subrepresentation $\bfW'\subset \Phi(\bfW)$,  each $\bfV'\in\Repd$ and $i\geqslant 2$, we have $\Ext^i_Q(\bfW',\bfV')=0$. 
\end{lem}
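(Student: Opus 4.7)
The plan is to reduce to Lemmas \ref{lem:Ext3=0} and \ref{lem:Ext2-Phi(W)} by a standard dimension-shifting argument, using the inclusion $\bfW'\hookrightarrow \Phi(\bfW)$ as the extra piece of input. For $i\geqslant 3$ there is nothing to do: Lemma \ref{lem:Ext3=0} already gives $\Ext^i_Q(\bfW',\bfV')=0$ for any $\bfW',\bfV'\in\Repd$. So the only case that needs work is $i=2$.

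To handle $i=2$, I would consider the short exact sequence in $\Repd$
$$
0\to \bfW'\to \Phi(\bfW)\to \bfW''\to 0,
$$
where $\bfW'':=\Phi(\bfW)/\bfW'$, and apply $\Hom_Q(-,\bfV')$. The resulting long exact sequence contains the fragment
$$
\Ext^2_Q(\Phi(\bfW),\bfV')\to \Ext^2_Q(\bfW',\bfV')\to \Ext^3_Q(\bfW'',\bfV').
$$
The left term vanishes by Lemma \ref{lem:Ext2-Phi(W)} (applied to $\bfW$ and $\bfV'$), and the right term vanishes by Lemma \ref{lem:Ext3=0} (applied to $\bfW'',\bfV'\in\Repd$). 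Squeezed between two zeros, $\Ext^2_Q(\bfW',\bfV')$ must be zero as well.

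The only real content in this proof is the observation that embedding $\bfW'$ into $\Phi(\bfW)$ lets us replace a subobject by an object in the essential image of $\Phi$, which has a $2$-step projective resolution, at the cost of shifting one Ext degree up; this shift is harmless thanks to the global dimension bound of Lemma \ref{lem:Ext3=0}. No technical obstacle is expected here; the genuinely new inputs have already been proved in the two preceding lemmas.
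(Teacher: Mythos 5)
Your proof is correct and takes essentially the same route as the paper: apply $\Hom_Q(-,\bfV')$ to $0\to\bfW'\to\Phi(\bfW)\to\Phi(\bfW)/\bfW'\to 0$ and squeeze $\Ext^i_Q(\bfW',\bfV')$ between $\Ext^i_Q(\Phi(\bfW),\bfV')$ (killed by Lemma~\ref{lem:Ext2-Phi(W)}) and $\Ext^{i+1}_Q(\Phi(\bfW)/\bfW',\bfV')$ (killed by Lemma~\ref{lem:Ext3=0}). The case split into $i=2$ and $i\geqslant 3$ is unnecessary, since the same three-term fragment of the long exact sequence handles all $i\geqslant 2$ at once, exactly as the paper does.
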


\begin{proof}
Apply the functor $\Hom_Q(\bullet,\bfV')$ to the short exact sequence 
$$
0\to \bfW'\to\Phi(\bfW)\to \Phi(\bfW)/\bfW'\to 0.
$$ 
We get an exact sequence 
$$
\Ext^i_Q(\Phi(\bfW),\bfV')\to\Ext^i_Q(\bfW',\bfV')\to \Ext^{i+1}_Q(\Phi(\bfW)/\bfW',\bfV').
$$
The left term is zero by Corollary \ref{coro:proj-inj-dim} and the right term is zero by Lemma \ref{lem:Ext3=0}. This implies that the middle term is also zero.
\end{proof}

\begin{coro}
\label{cor:Ext2=0, W',Phi(V)/V'}
Let $\bfV,\bfW\in \Rep(\Gamma)$ be such that $\Ext^1_\Gamma(\bfW,\bfV)=0$. Then for each representation $\bfV'\subset \Phi(\bfV)$ and each representation $\bfW'\subset \Phi(\bfW)$ we have $\Ext^1_Q(\bfW',\Phi(\bfV)/\bfV')=0$.
\end{coro}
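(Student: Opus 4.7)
The plan is to attack this by applying $\Hom_Q(\bfW',\bullet)$ to the short exact sequence
$$
0\to \bfV'\to \Phi(\bfV)\to \Phi(\bfV)/\bfV'\to 0
$$
in $\Repd$, which produces a long exact sequence of $\Ext^*_Q(\bfW',\bullet)$ groups. The relevant three-term portion is
$$
\Ext^1_Q(\bfW',\Phi(\bfV))\to \Ext^1_Q(\bfW',\Phi(\bfV)/\bfV')\to \Ext^2_Q(\bfW',\bfV').
$$
The goal is then to show the left and right terms both vanish, so the middle term is forced to be zero.

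For the left term, I would invoke Lemma \ref{lem:Ext0,W',Phi(V)}: since $\Ext^1_\Gamma(\bfW,\bfV)=0$ by hypothesis and $\bfW'\subset \Phi(\bfW)$, that lemma gives exactly $\Ext^1_Q(\bfW',\Phi(\bfV))=0$. For the right term, I would invoke Lemma \ref{lem:Ext2=0,W'} with $i=2$: since $\bfW'\subset \Phi(\bfW)$ and $\bfV'\in\Repd$, that lemma yields $\Ext^2_Q(\bfW',\bfV')=0$.

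The result is then immediate from the exactness of the above sequence. There is no real obstacle here, since the two input lemmas were the hard work; this corollary is essentially a bookkeeping step combining the vanishing of $\Ext^1_Q(\bfW',\Phi(\bfV))$ (the ``first-step'' Ext-vanishing obtained by lifting $\Ext^1_\Gamma=0$ through $\Phi$) with the ``higher Ext kills everything'' property of $\Repd$ applied to subrepresentations of objects in the image of $\Phi$. The only thing to be slightly careful about is that Lemma \ref{lem:Ext2=0,W'} is stated for $i\geqslant 2$, which covers the case $i=2$ we need.
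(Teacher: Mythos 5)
Your argument is exactly the paper's proof: apply $\Hom_Q(\bfW',\bullet)$ to $0\to\bfV'\to\Phi(\bfV)\to\Phi(\bfV)/\bfV'\to 0$, then kill the two outer terms of the resulting three-term exact sequence using Lemma \ref{lem:Ext0,W',Phi(V)} and Lemma \ref{lem:Ext2=0,W'}. No differences of substance.
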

\begin{proof}
Apply the functor $\Hom_Q(\bfW',\bullet)$ to the short exact sequence 
$$
0\to \bfV'\to\Phi(\bfV)\to \Phi(\bfV)/\bfV'\to 0.
$$ 
We get an exact sequence 
$$
\Ext^1_Q(\bfW',\Phi(\bfV))\to \Ext^1_Q(\bfW',\Phi(\bfV)/\bfV')\to \Ext^2_Q(\bfW',\bfV').
$$
The left term is zero by Lemma \ref{lem:Ext0,W',Phi(V)} and the right term is zero by Lemma \ref{lem:Ext2=0,W'}. Then the middle term is also zero.
\end{proof}

\subsection{Some exact sequences}
Fix $\bfW\in \Repd$. 

\begin{lem}
There is a short exact sequence 

$$
0\to\bigoplus_{r\in[1;d-1]}Qe_{r+1}\otimes_{\bbC\Gamma} \bfW_r{\to}\bigoplus_{i\in I,r\in [1;d]}Qe_r\otimes_{\bbC\Gamma} \bfW_r{\to}\bfW\to 0.
$$ 
\end{lem}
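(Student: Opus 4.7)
The plan is to exhibit the sequence as the result of tensoring the standard projective-bimodule resolution of $\bbC L_d$ with the $\bbC L_d$-module $\bfW$. Since $L_d$ is an acyclic quiver (in particular $\bbC L_d$ has global dimension $1$), the following is a projective bimodule resolution of $\bbC L_d$ over itself:
$$
0\to\bigoplus_{r=1}^{d-1}\bbC L_d e_{r+1}\otimes_{\bbC} e_r\bbC L_d\xrightarrow{\delta}\bigoplus_{r=1}^{d}\bbC L_d e_r\otimes_{\bbC} e_r\bbC L_d\xrightarrow{\mu}\bbC L_d\to 0,
$$
where $\mu$ is multiplication and, writing $a_r$ for the arrow $r\to r+1$ in $L_d$, $\delta(x\otimes y)=xa_r\otimes y-x\otimes a_ry$ on the $r$-th summand.

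First I would make the identification $Qe_r\otimes_{\bbC\Gamma}\bfW_r\simeq \bbC L_de_r\otimes_{\bbC}\bfW_r$ as $Q$-modules, using the factorization $Qe_r=\bbC\Gamma\otimes\bbC L_de_r$ together with the $\bbC\Gamma$-module structure on $\bfW_r$. Under this identification, the sequence in the lemma is obtained by applying $(-)\otimes_{\bbC L_d}\bfW$ to the bimodule resolution above. Exactness is preserved because every term is projective as a \emph{right} $\bbC L_d$-module, and the differentials are $\bbC\Gamma$-linear on the $\bfW_r$ factors since the transition maps $f_r\colon\bfW_r\to\bfW_{r+1}$ (induced by $a_r$) are $\bbC\Gamma$-linear; this is precisely the commutativity relation defining $\Repd$, so the sequence descends to one in $\Repd$.

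Alternatively, one can check exactness vertex by vertex. After the identification $(Qe_r\otimes_{\bbC\Gamma}\bfW_r)_{(i,s)}=(\bfW_r)_i$ for $s\geqs r$ and $0$ otherwise, the sequence at the vertex $(i,s)\in\widehat I$ reduces to
$$
0\to\bigoplus_{r\leqs s-1}(\bfW_r)_i\to\bigoplus_{r\leqs s}(\bfW_r)_i\to(\bfW_s)_i\to 0,
$$
whose left arrow is given by the lower-triangular block matrix with $\Id$ on the diagonal and $-f_r$ just below, and whose right arrow $(v_r)_{r\leqs s}\mapsto\sum_r(f_{s-1}\cdots f_r)(v_r)$ is plainly surjective. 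Injectivity of the left arrow is immediate from the triangular form, exactness in the middle is a short telescoping check, and the dimensions match.

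The main (modest) obstacle is not the algebraic content but the bookkeeping: one must verify that the differentials really define morphisms in $\Repd$, i.e., that they intertwine both the $\Gamma$-arrows and the $L_d$-arrows. Both are automatic: $\bbC\Gamma$-linearity of each $f_r$ ensures compatibility with $\Gamma$-arrows, while on each summand $Qe_r\otimes_{\bbC\Gamma}\bfW_r$ the $L_d$-transition maps act as identities on the relevant $(\bfW_r)_i$-components, so compatibility with the $a_r$ is trivial.
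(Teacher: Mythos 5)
Your argument is correct, and it is exactly the argument the paper has in mind: the cited proof in Crawley-Boevey's notes proceeds by writing down the standard projective resolution of a quiver representation and checking exactness directly, and your two variants (tensoring the bimodule resolution of $\bbC L_d$ with $\bfW$, or checking the complex vertex by vertex) are two standard packagings of that same computation, both adapted here to resolve only ``along the $L_d$ direction.''

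One small remark worth recording: the middle term as printed in the lemma, $\bigoplus_{i\in I,\,r\in [1;d]}Qe_r\otimes_{\bbC\Gamma}\bfW_r$, has a spurious index $i$ (the summand does not depend on $i$). Your derivation correctly produces $\bigoplus_{r\in[1;d]}Qe_r\otimes_{\bbC\Gamma}\bfW_r$, which is what the paper evidently intends --- this is confirmed by Corollary \ref{coro:long_ex_seq-Q}, where the corresponding term $\bigoplus_{r\in[1;d]}\Hom_\Gamma(\bfW_r,\bfV_r)$ carries no $i$-sum either.
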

\begin{proof}
The proof is very similar to \cite[p.7]{CB}.
\end{proof}

\begin{coro}
\label{coro:long_ex_seq-Q}
For each $\bfV,\bfW\in \Repd$, we have a long exact sequence 
$$
0\to\Hom_\eG(\bfW,\bfV)\to \bigoplus_{r\in[1;d]}\Hom_{\Gamma}(\bfW_r,\bfV_r)
$$
$$
\to\bigoplus_{r\in[1;d-1]} \Hom_{\Gamma}(\bfW_r,\bfV_{r+1})\to \Ext^1_Q(\bfW,\bfV).
$$
\end{coro}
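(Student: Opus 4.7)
The plan is to apply the contravariant functor $\Hom_Q(-,\bfV)$ to the short exact sequence supplied by the preceding lemma,
\[
0 \to \bigoplus_{r \in [1;d-1]} Qe_{r+1} \otimes_{\bbC\Gamma} \bfW_r \to \bigoplus_{r \in [1;d]} Qe_r \otimes_{\bbC\Gamma} \bfW_r \to \bfW \to 0,
\]
and truncate the resulting long exact sequence in $\Ext^*_Q(-,\bfV)$ after its fourth term. The leftmost term is $\Hom_Q(\bfW,\bfV)$, which coincides with $\Hom_\eG(\bfW,\bfV)$ because $\Repd$ is a full subcategory of $\Rep(\eG)$, so no identification is needed there.

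The remaining step is to identify each of the two middle terms via tensor-hom adjunction. The key observation is that for every $s \in [1;d]$ we have a canonical identification $e_s Q e_s \cong \bbC\Gamma$, which equips $Qe_s$ with a natural $(Q,\bbC\Gamma)$-bimodule structure. The adjunction then yields, for any left $\bbC\Gamma$-module $M$,
\[
\Hom_Q(Qe_s \otimes_{\bbC\Gamma} M,\bfV) \;\cong\; \Hom_{\bbC\Gamma}(M,\Hom_Q(Qe_s,\bfV)) \;=\; \Hom_{\bbC\Gamma}(M,e_s\bfV) \;=\; \Hom_\Gamma(M,\bfV_s),
\]
using the standard isomorphism $\Hom_Q(Qe_s,\bfV) = e_s\bfV$. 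Specializing to $(s,M) = (r,\bfW_r)$ rewrites the second term of the sequence as $\bigoplus_{r \in [1;d]} \Hom_\Gamma(\bfW_r,\bfV_r)$, and to $(s,M) = (r+1,\bfW_r)$ rewrites the third term as $\bigoplus_{r \in [1;d-1]} \Hom_\Gamma(\bfW_r,\bfV_{r+1})$.

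The step I would be most careful about is the bookkeeping for the right $\bbC\Gamma$-action on $Qe_s$ used to form the tensor product: it must be the one coming from the canonical isomorphism $e_s Q e_s \cong \bbC\Gamma$, so that the adjunction is compatible with the indexing $\bfV_s = e_s\bfV$ appearing on the right-hand side and so that the connecting differentials in the long exact sequence go between the correct components. Once this compatibility is fixed, assembling the three identifications produces precisely the claimed four-term exact sequence, and no further work beyond the previous lemma is required.
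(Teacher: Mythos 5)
Your proof is correct and follows essentially the same route as the paper, which simply says to apply $\Hom_Q(\bullet,\bfV)$ to the short exact sequence from the preceding lemma; the tensor--hom adjunction bookkeeping you supply (identifying $\Hom_Q(Qe_s\otimes_{\bbC\Gamma}M,\bfV)\cong\Hom_\Gamma(M,\bfV_s)$ via the $(Q,\bbC\Gamma)$-bimodule structure on $Qe_s$ coming from $e_sQe_s\cong\bbC\Gamma$) is exactly the detail the paper leaves implicit, and you correctly flag that this bimodule structure is what makes the indexing $\bfV_s = e_s\bfV$ come out right.
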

\begin{proof}
We apply the functor $\Hom_Q(\bullet,\bfV)$ to the short exact sequence above. 
\end{proof}

\subsection{Flags}

Let $\bfV=\bigoplus_{i\in I}\bfV_i$ be a representation of $\Gamma$ with dimension vector $v=(v_i)_{i\in I}$. 

\begin{df}
A \emph{flag type} of weight $v$ is a $d$-tuple of dimension vectors $\bfv=(\bfv_1,\ldots,\bfv_d)$, $\bfv_r=(\bfv_{r,i})_{i\in I}$ such that $\bfv_d=v$ and for each $i\in I$ we have $\bfv_{1,i}\leqslant \bfv_{2,i}\leqslant\ldots\leqslant \bfv_{d,i}$.

A \emph{flag} in $\bfV$ of type $\bfv$ is a sequence of subrepresentations $\bfV^1\subset \bfV^2\subset\ldots\subset \bfV^d=\bfV$ of $\bfV$ such that $\dimI\bfV^r=\bfv_r$.

Denote by $\calF_\bfv(\bfV)$ the set of all flags of type $\bfv$ in $\bfV$. This set has an obvious structure of a projective algebraic variety over $\bbC$. 
\end{df} 
The notion above generalizes quiver Grassmannians. 
\begin{df}
Let $\bfV$ be a representation of $\Gamma$ and lets $v'$ be some dimension vector. The quiver Grassmannian is the following variety 
$$
\calG_{v'}(\bfV)=\{\bfV'\subset \bfV;~\dimI\bfV'=v', ~\mbox{$\bfV'$ is a subrepresentation of $\bfV$}\}.
$$
\end{df}
Set $v=\dimI\bfV$. We clearly have, $\calG_{v'}(\bfV)\simeq \calF_{(v',v)}(\bfV)$. However, it is also always possible to realize the variety $\calF_{\bfv}(\bfV)$ as a quiver Grassmannian for the representation $\Phi(\bfV)$ of $\eG$. This works in the following way. The flag type $\bfv=(\bfv_1,\bfv_2,\ldots,\bfv_d)$ can be seen as a dimension vector $\alpha=(\alpha_{(i,r)})_{(i,r)\in \widehat I}$ for the quiver $\eG$ given by $\alpha_{(i,r)}=\bfv_{r,i}$. The following statement is obvious from the definitions.

\begin{prop}
	\label{prop:quiv-flag-via-quiv-Gr}
	There is an isomorphism of algebraic varieties $\calF_\bfv(\bfV)\simeq \calG_\alpha(\Phi(\bfV))$.
\end{prop}
\begin{proof}
	Taking a subrepresentation with dimension vector $\alpha$ of $\Phi(\bfV)$ means that for each $(i,r)\in \widehat I$ we take a vector subspace $\bfU_{(i,r)}$ of $\bfV_i$ with extra conditions on them. The conditions coming from the arrows of the form $(s(h),r)\to (t(h),r)$ imply that $ \bfU_r:=\bigoplus_{i\in I}\bfU_{(i,r)}$ is a subrepresentation of $\bfV$ with dimension vector $\bfv_r$ and the conditions coming from the arows of the form $(i,r)\to (i,r+1)$ imply $\bfU_r\subset \bfU_{r+1}$.
\end{proof}

\subsection{Reduction to the indecomposable case}
We set ${<w,v>=\sum_{i\in I}w_i\cdot v_i-\sum_{h\in H}w_{s(h)}\cdot v_{t(h)}}$, where $v$ and $w$ are dimension vectors.
We start from the following well-known lemma, see \cite[\S 1]{CB}.
\begin{lem}
Let $\bfV, \bfW\in\Rep(\Gamma)$ be two representations. Let $v$ and $w$ be their dimension vectors. 
Then we have an exact sequence 
{\fontsize{8.5pt}{9pt}\selectfont
$$
0\to\Hom_\Gamma(\bfW,\bfV)\to \bigoplus_{i\in I}\Hom(\bfW_i,\bfV_i)\to\bigoplus_{h\in H}\Hom(\bfW_{s(h)},\bfV_{t(h)})\to \Ext^1_\Gamma(\bfW,\bfV)\to 0.
$$
}
In particular, we have
$$
\dim\Hom_\Gamma(\bfW,\bfV)-\dim\Ext_\Gamma^1(\bfW,\bfV)=<w,v>.
$$ 
\end{lem}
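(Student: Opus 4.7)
The plan is to exhibit the four-term sequence as the long exact sequence obtained by applying $\Hom_\Gamma(-,\bfV)$ to the standard two-step projective resolution of $\bfW$ over $\bbC\Gamma$. This is directly parallel to the construction used in the preceding section for the extended quiver (see the exact sequence before Corollary \ref{coro:long_ex_seq-Q}), which itself is modeled on \cite[p.7]{CB}.

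First I would write down, for each vertex $i\in I$, the indecomposable projective $P_i=\bbC\Gamma\cdot e_i$, and assemble the short exact sequence of $\bbC\Gamma$-modules
$$
0\to\bigoplus_{h\in H}P_{h''}\otimes_{\bbC}\bfW_{h'}\xrightarrow{\alpha}\bigoplus_{i\in I}P_i\otimes_{\bbC}\bfW_i\xrightarrow{\beta}\bfW\to 0,
$$
where $\beta$ is given by the $\bbC\Gamma$-action on $\bfW$ (sending $p\otimes w\mapsto p\cdot w$), and $\alpha$ sends $q\otimes w\in P_{h''}\otimes\bfW_{h'}$ to $qh\otimes w-q\otimes h_\bfW(w)$, with $h_\bfW\colon \bfW_{h'}\to\bfW_{h''}$ the structure map of $\bfW$ at the arrow $h$. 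Checking exactness is a direct verification on the underlying vector spaces vertex by vertex: surjectivity of $\beta$ is clear from the unit elements $e_i\otimes w$, and injectivity of $\alpha$ plus exactness in the middle reduces to the fact that $\bbC\Gamma$ has a $\bbC$-basis given by paths, so once one identifies the image of $\alpha$ as the kernel of $\beta$ using the path-length filtration, the sequence is exact. Since $P_i$ and $P_{h''}$ are projective, this is a projective resolution of $\bfW$; in particular $\Rep(\Gamma)$ has global dimension $\leqs 1$.

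Next I would apply the functor $\Hom_\Gamma(-,\bfV)$. Because $P_i$ is the projective cover of the simple at vertex $i$, the standard adjunction gives natural isomorphisms $\Hom_\Gamma(P_i\otimes\bfW_i,\bfV)\simeq\Hom_{\bbC}(\bfW_i,\bfV_i)$ and $\Hom_\Gamma(P_{h''}\otimes\bfW_{h'},\bfV)\simeq\Hom_{\bbC}(\bfW_{h'},\bfV_{h''})$. The induced long exact sequence of $\Ext^*_\Gamma(-,\bfV)$ truncates, because the resolution has length $1$, to precisely the four-term exact sequence claimed, with the map $\bigoplus_i\Hom(\bfW_i,\bfV_i)\to\bigoplus_h\Hom(\bfW_{h'},\bfV_{h''})$ the dual of $\alpha$, sending $(\phi_i)_i$ to $(\phi_{h''}\circ h_\bfW-h_\bfV\circ\phi_{h'})_h$.

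Finally, the dimensional identity is read off from the exact sequence by taking the alternating sum of dimensions, yielding $\dim\Hom_\Gamma(\bfW,\bfV)-\dim\Ext^1_\Gamma(\bfW,\bfV)=\sum_{i\in I}w_iv_i-\sum_{h\in H}w_{h'}v_{h''}=<w,v>$. I do not expect any real obstacle here: the only nontrivial input is the exactness of the explicit projective resolution of $\bfW$, which is the classical Ringel resolution for hereditary quiver algebras; once that is in hand, the rest is a mechanical application of $\Hom_\Gamma(-,\bfV)$.
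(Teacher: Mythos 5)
Your proof is correct, and it is exactly the standard argument the paper is pointing to: the paper states this lemma without proof, citing Crawley-Boevey's notes (\cite[\S 1]{CB}), and your construction via the projective (Ringel) resolution $0\to\bigoplus_{h}P_{h''}\otimes\bfW_{h'}\to\bigoplus_{i}P_i\otimes\bfW_i\to\bfW\to 0$ followed by $\Hom_\Gamma(-,\bfV)$ is precisely that cited argument, and also the same mechanism the paper itself uses for the extended quiver in the short exact sequence preceding Corollary \ref{coro:long_ex_seq-Q}.
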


\begin{coro}
\label{cor:dimHom-Ext0}
If additionally we have $\Ext^1_\Gamma(\bfW,\bfV)=0$, then we have a short exact sequence
\begin{equation}
\label{eq:ses-Hom-quiver}
0\to\Hom_\Gamma(\bfW,\bfV)\to \bigoplus_{i\in I}\Hom(\bfW_i,\bfV_i)\to\bigoplus_{h\in H}\Hom(\bfW_{s(h)},\bfV_{t(h)})\to 0
\end{equation}
In particular, we have
$\dim\Hom_\Gamma(\bfW,\bfV)=<w,v>$.
\end{coro}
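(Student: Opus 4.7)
The plan is to read the corollary as an immediate consequence of the preceding lemma: the four-term exact sequence of the lemma collapses to a three-term short exact sequence as soon as its rightmost entry vanishes, and then taking alternating dimensions gives the stated equality. So the first step is simply to quote the lemma verbatim and invoke the vanishing hypothesis to kill the $\Ext^1$ term on the right; the second step is to take dimensions and simplify.

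A small point worth flagging before starting: the lemma's four-term sequence ends with $\Ext^1_\Gamma(\bfW,\bfV)$, not $\Ext^1_\Gamma(\bfV,\bfW)$, so for the argument to go through literally one reads the hypothesis as $\Ext^1_\Gamma(\bfW,\bfV)=0$ (presumably a transposition of arguments in the statement). Under this reading the exact sequence truncates to
\begin{equation*}
0\to\Hom_\Gamma(\bfW,\bfV)\to \bigoplus_{i\in I}\Hom(\bfW_i,\bfV_i)\to\bigoplus_{h\in H}\Hom(\bfW_{h'},\bfV_{h''})\to 0,
\end{equation*}
which is exactly \eqref{eq:ses-Hom-quiver}.

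For the numerical assertion I would take the Euler characteristic of this short exact sequence: the dimensions of the two middle-right summands are $\sum_{i\in I} w_i v_i$ and $\sum_{h\in H} w_{h'} v_{h''}$ respectively, so
\begin{equation*}
\dim\Hom_\Gamma(\bfW,\bfV)=\sum_{i\in I} w_i v_i-\sum_{h\in H} w_{h'} v_{h''}=\langle w,v\rangle,
\end{equation*}
which is the claim. There is no real obstacle here; the entire content is already encoded in the lemma, and the only thing I would double-check is the direction of the Ext-vanishing hypothesis relative to the four-term sequence, as mentioned above.
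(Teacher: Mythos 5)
Your proof is correct and matches the paper's (implicit) argument: quote the preceding lemma, use the $\Ext^1$-vanishing hypothesis to truncate the four-term exact sequence to the stated short exact sequence, and then take Euler characteristics to get the dimension formula. You are also right that the hypothesis as printed, $\Ext^1_\Gamma(\bfV,\bfW)=0$, is a transposition of arguments and should read $\Ext^1_\Gamma(\bfW,\bfV)=0$; the paper uses exactly this corrected form in the later applications (e.g.\ in the proof of Proposition \ref{prop:Ext=0-bundle}).
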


Let $\bfV$ and $\bfW$ be representations of $\Gamma$ with dimension vectors $v$ and $w$ respectively. Let $\bfv$ and $\bfw$ be flag types (with respect to $v$ and $w$ respectively). Assume that $0\to \bfV\to\bfU\stackrel{\pi}{\to}\bfW\to 0$ is a short exact sequence in $\Rep(\Gamma)$.
For each flag $\phi$ in $\bfU$ we denote by $\phi\cap \bfV$ the flag in $\bfV$ obtained by the intersection of components of $\phi$ with $\bfV$ and we denote by $\pi(\phi)$ the flag in $\bfW$ obtained by the images in $\bfW$ of the components of $\phi$. Set 
$$
\calF_{\bfv,\bfw}(\bfU)=\{\phi\in \calF_{\bfv+\bfw}(\bfU);~\phi\cap V\in\calF_{\bfv}(\bfV)\}.
$$
It is clear from the definition that if $\phi$ is in $\calF_{\bfv,\bfw}(\bfU)$, then $\pi(\phi)$ is in $\calF_{\bfw}(\bfW)$.

For a flag type $\bfv$, we set $\bfvv_r=\bfv_r-\bfv_{r-1}$ for $r\in[2;d]$ and $\bfvv_1=\bfv_1$. Similarly, we define $\bfww_r$ for a flag type $\bfw$.
\begin{prop}
\label{prop:Ext=0-bundle}
Assume that we have $\Ext^1_\Gamma(\bfW,\bfV)=0$. 
Then 
$$
\calF_{\bfv,\bfw}(\bfU)\to\calF_\bfv(\bfV)\times \calF_\bfw(\bfW), \qquad \phi\mapsto (\phi\cap \bfV,\pi(\phi))
$$
is a vector bundle of rank 
$$
\sum_{r=1}^{d-1}\sum_{t=r+1}^d <\bfvv_r,\bfww_t>.
$$
\end{prop}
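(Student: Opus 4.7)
The idea is to reinterpret $\calF_\bfv(\bfV)$ as a Grassmannian-type variety inside $\Repd$ and then to run the splitting argument of IEFR (proof of Proposition~\ref{prop:key-recall}) in that setting, using the Ext-vanishing results of Section~\ref{subs-Ext-vanish} in place of the single vanishing of $\Ext^2_\Gamma$ used in the original proof.

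To start, a flag $\bfV^1\subset\cdots\subset\bfV^d=\bfV$ of type $\bfv$ is the same datum as a subrepresentation $\bfV'\subset\Phi(\bfV)$ in $\Repd$ with $\dim e_r\bfV'=\bfv_r$, via $e_r\bfV'=\bfV^r$. Under this identification, $\phi\in\calF_{\bfv+\bfw}(\bfU)$ corresponds to a subrepresentation $\bfU'\subset\Phi(\bfU)$, and the conditions $\phi\cap\bfV\in\calF_\bfv(\bfV)$, $\pi(\phi)\in\calF_\bfw(\bfW)$ become $\bfU'\cap\Phi(\bfV)=\bfV'$ and $\pi(\bfU')=\bfW'$. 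Applying $\Phi$ (exact by Lemma~\ref{lem:Phi-ff}) to the given short exact sequence produces $0\to\Phi(\bfV)\to\Phi(\bfU)\to\Phi(\bfW)\to 0$ in $\Repd$.

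Fixing $(\bfV',\bfW')$ in the base, an element $\bfU'$ of the fibre corresponds bijectively to a splitting of
\[
0\to\Phi(\bfV)/\bfV'\to\pi^{-1}(\bfW')/\bfV'\to\bfW'\to 0
\]
(send $\bfU'$ to $\bfU'/\bfV'\hookrightarrow\pi^{-1}(\bfW')/\bfV'$, which is a section of the projection onto $\bfW'$). By Corollary~\ref{cor:Ext2=0, W',Phi(V)/V'} the obstruction class in $\Ext^1_Q(\bfW',\Phi(\bfV)/\bfV')$ vanishes, so a splitting exists and the fibre is a non-empty torsor over $\Hom_Q(\bfW',\Phi(\bfV)/\bfV')$.

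To obtain constant fibre dimension and compute the rank, I would combine Corollary~\ref{coro:long_ex_seq-Q} with the above $\Ext^1_Q$-vanishing to get a short exact sequence expressing $\dim\Hom_Q(\bfW',\Phi(\bfV)/\bfV')$ as an alternating sum of terms $\dim\Hom_\Gamma(\bfW_r',\bfV/\bfV_s')$ for $s\in\{r,r+1\}$. Using $\Ext^1_\Gamma(\bfW,\bfV)=0$ together with $\mathrm{gldim}\,\Rep(\Gamma)\le 1$, one deduces $\Ext^1_\Gamma(\bfW_r',\bfV/\bfV_s')=0$ for all $r,s$, so each $\Hom_\Gamma$ equals the Euler form $\langle -,-\rangle$; a telescoping using $\bfw_r=\sum_{s\le r}\bfww_s$ and $\bfv_{r+1}-\bfv_r=\bfvv_{r+1}$ then reduces the alternating sum to the claimed double sum. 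Finally, working over $\calF_\bfv(\bfV)\times\calF_\bfw(\bfW)$ with universal subrepresentations, the same Ext-vanishing globalises, the relative $\Hom_Q$-sheaf is locally free of the rank just computed, and the fibrewise splittings assemble into a Zariski-locally trivial affine bundle. The main obstacle is not the splitting argument itself but the Ext bookkeeping: $\Repd$ has global dimension $2$, so the single use of $\Ext^2_\Gamma=0$ in IEFR must be replaced by the chain of vanishings in Section~\ref{subs-Ext-vanish} (Lemmas~\ref{lem:Ext0,W',Phi(V)}, \ref{lem:Ext3=0}, \ref{lem:Ext2-Phi(W)}, \ref{lem:Ext2=0,W'}) that culminate in Corollary~\ref{cor:Ext2=0, W',Phi(V)/V'}.
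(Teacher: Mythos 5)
Your proposal is correct and follows essentially the same route as the paper: both reinterpret a flag as a subrepresentation of $\Phi(\bfV)$ in $\Repd$, identify the fibre of the map with $\Hom_Q(\bfW',\Phi(\bfV)/\bfV')$, invoke Corollary~\ref{cor:Ext2=0, W',Phi(V)/V'} and Corollary~\ref{coro:long_ex_seq-Q} to get the short exact sequence controlling the fibre, and telescope to compute the rank. The one presentation difference is that you describe the fibre abstractly as a torsor over $\Hom_Q(\bfW',\Phi(\bfV)/\bfV')$ whose non-emptiness is guaranteed by the $\Ext^1_Q$-vanishing, whereas the paper begins by fixing an isomorphism $\bfU\simeq\bfV\oplus\bfW$ (available precisely because $\Ext^1_\Gamma(\bfW,\bfV)=0$), which gives a canonical base point $\bfU^r=\bfV^r\oplus\bfW^r$ in each fibre and hence directly exhibits the map as a \emph{vector} bundle rather than merely an affine bundle. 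You should add that remark, since the statement asserts a vector bundle: one can globally split $\bfU$, and the resulting section trivialises the torsor. The paper also phrases the global bundle structure more concretely, realising $\calF_{\bfv,\bfw}(\bfU)$ as the kernel of a surjective morphism between the vector bundles whose fibres are $\bigoplus_r\Hom_\Gamma(\bfW'_r,\bfV/\bfV'_r)$ and $\bigoplus_r\Hom_\Gamma(\bfW'_r,\bfV/\bfV'_{r+1})$, which is cleaner than the closing sentence of your sketch.
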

\begin{proof}
Fix an isomorphism $\bfU\simeq \bfV\oplus\bfW$ in $\Rep(\Gamma)$. This is possible by the assumption on $\Ext^1$. 

First, we want to understand the fibers of the given map.
Fix $(\phi_\bfV,\phi_\bfW)\in \calF_\bfv(\bfV)\times \calF_\bfw(\bfW)$, where 
$\phi_\bfV=(\{0\}\subset \bfV^1\subset \bfV^2\subset\ldots\subset \bfV^d=\bfV)$ and $\phi_\bfW=(\{0\}\subset \bfW^1\subset \bfW^2\subset\ldots\subset \bfW^d=\bfW)$.
Let us describe the fiber of $(\phi_\bfV,\phi_\bfW)$.
For each $r\in[1;d]$, we want to construct a subrepresentation $\bfU^r$ of $\bfU$ such that $\bfU^r\cap \bfV=\bfV^r$ and $\pi(\bfU^r)=\bfW^r$. The choices of such a subrepresentation are parameterized by $\Hom_\Gamma(\bfW^r,\bfV/\bfV^r)$. Indeed, to each map $f\in \Hom_\Gamma(\bfW^r,\bfV/\bfV^r)$ we can associate a subrepresentation $\bfU^r\subset \bfV\oplus \bfW$ generated by the elements $(v,w)\in \bfV\oplus \bfW^r$ such that the image of $v$ in $\bfV/\bfV^r$ is $f(w)$.

 Moreover, for each $r\in[1;d-1]$, we must have $\bfU^r\subset \bfU^{r+1}$. This condition is equivalent to the commutativity of the following diagram
$$
\begin{CD}
\bfW^r @>>>\bfV/\bfV^r\\
@VVV       @VVV\\
\bfW^{r+1} @>>>\bfV/\bfV^{r+1}.
\end{CD}
$$ 

To sum up, a point of the fiber of $(\phi_\bfV,\phi_\bfW)$ is described by a family of homomorphisms $\Hom_\Gamma(\bfW^r,\bfV/\bfV^r)$ for $r\in [1;d]$ such that for each $r\in [1;d-1]$ the diagram above commutes.

Now, we change the point of view to describe this fiber in a different way. We can consider the flag $\phi_\bfV$ as a subrepresentation $\bfV'=\bigoplus_{r\in[1;d],i\in I}\bfV'_{(r,i)}$ of $\Phi(\bfV)$, see Proposition \ref{prop:quiv-flag-via-quiv-Gr}. The component $\bfV^r$ of the flag $\phi_\bfV$ is now considered as a part $\bfV'_r=\bigoplus_{i\in I}\bfV'_{(r,i)}$ of the representation $\bfV'$ of $\eG$. The flag type $\bfv$ can be considered as a dimension vector for the quiver $\eG$. Moreover, the representation $\bfV'$ of $\eG$ has dimension vector $\bfv$. Similarly, we consider the flag $\phi_\bfW$ as a subrepresentation $\bfW'$ of $\Phi(\bfW)$ with dimension vector $\bfw$. Then the fiber of $(\phi_\bfV,\psi_\bfW)$ is simply $\Hom_\eG(\bfW',\Phi(\bfV)/\bfV')$.

We have an obvious inclusion of vector spaces $\Hom_\eG(\bfW',\Phi(\bfV)/\bfV')\subset \bigoplus_{r\in[1;d]}\Hom_{\Gamma}(\bfW'_r,\bfV/\bfV'_r)$. Let us show that $\calF_{\bfv,\bfw}(\bfU)$ is a subbundle of the vector bundle on $\calF_\bfv(\bfV)\times \calF_\bfw(\bfW)$ with fiber $\bigoplus_{r\in[1;d]}\Hom_{\Gamma}(\bfW'_r,\bfV/\bfV'_r)$. For this, it is enough to present $\calF_{\bfv,\bfw}(\bfU)$ as a kernel of a morphism of vector bundles of constant rank. 

Indeed, since we assumed $\Ext^1_\Gamma(\bfW,\bfV)=0$, Corollary \ref{cor:Ext2=0, W',Phi(V)/V'} implies that we have $\Ext^1_Q(\bfW',\Phi(\bfV)/\bfV')=0$. Then the exact sequence in Corollary \ref{coro:long_ex_seq-Q} yields the short exact sequence
\begin{equation}
\label{eq:ses-bundle-final}
0\to\Hom_\eG(\bfW',\Phi(\bfV)/\bfV')\to \bigoplus_{r\in[1;d]}\Hom_{\Gamma}(\bfW'_r,\bfV/\bfV'_r)
\end{equation}
$$
\to\bigoplus_{r\in[1;d-1]} \Hom_{\Gamma}(\bfW'_r,\bfV/\bfV'_{r+1})\to 0.
$$

This short exact sequence implies that $\calF_{\bfv,\bfw}(\bfU)$ is a kernel of a surjective morphism of vector bundles. In particular, it is also a vector bundle. (Note that the vector bundles with fibers $\Hom_{\Gamma}(\bfW'_r,\bfV/\bfV'_r)$ and $\Hom_{\Gamma}(\bfW'_r,\bfV/\bfV'_{r+1})$ are defined via the same procedure. They are kernels of surjective morphisms of vector bundles coming from the short exact sequence (\ref{eq:ses-Hom-quiver}). See also the proof of \cite[Thm.~26]{IEFR}.)

Let us calculate the rank of this vector bundle. It is clear from the short exact sequence above that it is equal to 
$$
\sum_{r=1}^d \dim\Hom_{\Gamma}(\bfW'_r,\bfV/\bfV'_r)-\sum_{r=1}^{d-1} \dim\Hom_{\Gamma}(\bfW'_r,\bfV/\bfV'_{r+1}).
$$
By Corollary \ref{cor:dimHom-Ext0}, this rank is equal to 
$$
\sum_{r=1}^d<\bfw_r,v-\bfv_r>-\sum_{r=1}^{d-1}<\bfw_r,v-\bfv_{r+1}>~=~\sum_{r=1}^{d-1}<\bfw_r,\bfvv_{r+1}>
$$
$$
=\sum_{r=1}^{d-1}\sum_{t=r+1}^d<\bfww_r,\bfvv_{t}>.
$$
\end{proof}

\subsection{Affine paving}
\begin{df}
Let $X$ be a complex algebraic variety. We say that $X$ admits an affine paving if $X$ has a finite partition $X=X_1\coprod X_2\coprod \ldots \coprod X_n$ such that 
\begin{enumerate}
\item for each $1\leqslant r \leqslant n $, the union $X_1\coprod \ldots \coprod X_r$ is closed,
\item each $X_r$ is isomorphic to an affine space.
\end{enumerate}
\end{df}

\begin{prop}
\label{prop:reduction-indec}
Let $\Gamma$ be a Dynkin quiver. Let $\bfV$ be a representation of $\Gamma$. Let us decompose it in a direct sum of indecomposable representations
$$
\bfV=\bfV_1^{\oplus n_1}\oplus\ldots\oplus  \bfV_k^{\oplus n_k}.
$$  

Assume that for each $r\in[1;k]$ and each flag type $\bfv^r$ of weight $(\dimI\bfV_r)$, the variety $\calF_{\bfv^r}(\bfV_r)$ is either empty or has an affine paving. Then for each flag type of weight $(\dimI\bfV)$, the variety $\calF_\bfv(\bfV)$ is either empty or has an affine paving.

\end{prop}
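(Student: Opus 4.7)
The plan is to induct on the total multiplicity $n := n_1 + \cdots + n_k$ of indecomposable summands in $\bfV$; the base case $n=1$ is exactly the hypothesis of the proposition. For the inductive step, I would split off a single indecomposable summand $\bfW$ from $\bfV$ so that the remainder $\bfV'$ satisfies $\Ext^1_\Gamma(\bfW,\bfV')=0$. Since $\Gamma$ is Dynkin and has only finitely many indecomposable representations, one can totally order the distinct indecomposables $\bfV_1,\ldots,\bfV_k$ appearing in $\bfV$ so that $\Ext^1_\Gamma(\bfV_j,\bfV_i)=0$ for all $i\leqslant j$, using an Auslander--Reiten compatible ordering together with the rigidity $\Ext^1_\Gamma(\bfV_j,\bfV_j)=0$ of each Dynkin indecomposable. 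Choosing $j$ maximal with $n_j>0$, setting $\bfW:=\bfV_j$ and $\bfV':=\bfV_1^{\oplus n_1}\oplus\cdots\oplus \bfV_j^{\oplus(n_j-1)}$ then produces a split short exact sequence $0\to\bfV'\to\bfV\to\bfW\to 0$ with $\Ext^1_\Gamma(\bfW,\bfV')=0$, and $\bfV'$ has strictly smaller total multiplicity.

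Next I would stratify $\calF_\bfv(\bfV)$ by the dimension type of intersection with $\bfV'$. For each flag $\phi=(\bfU^1\subset\cdots\subset\bfU^d=\bfV)$, define $\bfv'(\phi)_r:=\dim(\bfU^r\cap\bfV')$; this is a flag type of weight $\dim\bfV'$, and the image flag $\pi(\phi)$ in $\bfW$ has type $\bfw(\phi):=\bfv-\bfv'(\phi)$. This yields a finite decomposition
$$
\calF_\bfv(\bfV)=\coprod_{(\bfv',\bfw)}\calF_{\bfv',\bfw}(\bfV).
$$
The function $\phi\mapsto\dim(\bfU^r\cap\bfV')$ is upper semicontinuous (via $\dim(\bfU^r\cap\bfV')=\dim\bfU^r+\dim\bfV'-\dim(\bfU^r+\bfV')$ together with lower semicontinuity of rank), so the locus where $\bfv'(\phi)$ is componentwise $\geqslant\bfv'_0$ is closed for every $\bfv'_0$. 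Consequently the closure of any stratum is contained in the union of strata indexed by componentwise larger $\bfv'$.

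By Proposition~\ref{prop:Ext=0-bundle}, each nonempty stratum $\calF_{\bfv',\bfw}(\bfV)$ is a vector bundle over $\calF_{\bfv'}(\bfV')\times\calF_\bfw(\bfW)$. The inductive hypothesis gives an $\alpha$-partition into affine spaces on $\calF_{\bfv'}(\bfV')$ (when nonempty), while the hypothesis on indecomposables gives one on $\calF_\bfw(\bfW)$. Two standard observations then close the argument: a product of two varieties admitting such $\alpha$-partitions admits one (via lexicographic refinement of the product cells, whose closure condition follows from the fact that initial unions of factor strata are closed), and an algebraic vector bundle over an affine space is trivial by Quillen--Suslin, hence itself an affine space. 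Pulling back the base $\alpha$-partition therefore produces an $\alpha$-partition on each stratum. Ordering the finitely many strata of $\calF_\bfv(\bfV)$ so that those with componentwise larger $\bfv'$ come first (any refinement of the partial closure order to a total order suffices) and concatenating the per-stratum $\alpha$-partitions yields the desired $\alpha$-partition of $\calF_\bfv(\bfV)$.

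The main obstacle I anticipate is verifying the closure condition (the ``$\alpha$'' in $\alpha$-partition); upper semicontinuity of intersection dimensions is the key ingredient, and once it is in hand the remaining combinatorics is routine bookkeeping. The use of Quillen--Suslin to identify the total space of a vector bundle over an affine cell with an affine space is standard in this context and implicit in the approach of \cite{IEFR}.
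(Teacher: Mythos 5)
Your proof is correct and follows essentially the same route as the paper: the paper's proof is the single observation that the indecomposables can be ordered so that $\Ext^1(\bfV_r,\bfV_t)=0$ for $r\leqslant t$, after which Proposition~\ref{prop:Ext=0-bundle} plus induction does the work. You have spelled out the details the paper leaves implicit --- the induction on total multiplicity, the stratification by intersection dimension, the semicontinuity argument for the closure condition, the lexicographic product of $\alpha$-partitions, and the Quillen--Suslin triviality of vector bundles over affine cells --- but the underlying mechanism (directedness of $\Ext^1$ for Dynkin indecomposables plus the vector-bundle proposition) is identical.
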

\begin{proof}
We can assume that $\bfV_1,\ldots, \bfV_k$ are ordered in such a way that we have $\Ext^1(\bfV_r,\bfV_t)=0$ if $r\leqslant t$.
Then the statement follows easily from Proposition \ref{prop:Ext=0-bundle} by induction. 
\end{proof}

\subsection{Types $A$ and $D$}

The goal of this section is to prove the following theorem.

\begin{thm}
\label{thm:main}
Assume that $\Gamma$ is a Dynkin quiver of type $A$ or $D$. Then the variety $\calF_\bfv(\bfV)$ is either empty or has an affine paving.
\end{thm}
\begin{proof}
By Proposition \ref{prop:reduction-indec}, it is enough to prove the statement for indecomposable representations. This is done in two lemmas below.
\end{proof}

\begin{lem}
Assume that $\Gamma$ is a Dynkin quiver of type $A$ and that $\bfV$ is an indecomposable representation. Then the variety $\calF_\bfv(\bfV)$ is either empty or is a singleton.
\end{lem}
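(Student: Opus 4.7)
The plan is to exploit the fact that in type $A$, Gabriel's theorem gives a very rigid description of indecomposable representations: they are \emph{thin}, meaning each vector space component is at most one-dimensional. Concretely, if $\bfV$ is an indecomposable representation of a Dynkin quiver of type $A_n$, there is an interval $[a,b]$ of vertices (with respect to the underlying linear ordering) such that $\dim \bfV_i = 1$ for $i \in [a,b]$ and $\dim \bfV_i = 0$ otherwise; moreover, every arrow map $\bfV_{h'}\to\bfV_{h''}$ with both ends in $[a,b]$ is an isomorphism, and the remaining ones are zero.

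The key observation is that any subrepresentation $\bfV' \subset \bfV$ of such a thin $\bfV$ is \emph{uniquely determined by its dimension vector}. Indeed, since $\dim\bfV_i \leqslant 1$, the subspace $\bfV'_i \subset \bfV_i$ is determined by $\dim\bfV'_i \in \{0,1\}$: if the dimension is $0$, the subspace is $\{0\}$, and if it is $1$, the subspace must equal $\bfV_i$. Therefore, given a prescribed dimension vector $\bfv' \leqslant \dim\bfV$, there is at most one candidate collection of subspaces $(\bfV'_i)_{i\in I}$, and this candidate is either a subrepresentation (if compatibility with every arrow holds) or it is not — in which case no subrepresentation of dimension $\bfv'$ exists at all.

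Applying this termwise to a flag: a point of $\calF_\bfv(\bfV)$ is a chain $\bfV^1 \subset \bfV^2 \subset \cdots \subset \bfV^d = \bfV$ of subrepresentations with prescribed dimension vectors $\bfv_1,\ldots,\bfv_d$. By the previous paragraph each $\bfV^r$ is forced, so the entire flag is forced. Consequently $\calF_\bfv(\bfV)$ has at most one point; it is either empty or a singleton.

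I do not anticipate any real obstacle: the argument is purely combinatorial and relies only on the thinness of type $A$ indecomposables. The only thing to be careful about is that the candidate subspaces must genuinely assemble into a subrepresentation (equivalently, the support of $\bfv'$ inside $[a,b]$ must be closed under those arrows of $\Gamma$ that lie in $[a,b]$); when this fails, $\calF_\bfv(\bfV)$ is empty, which is precisely the alternative allowed by the statement.
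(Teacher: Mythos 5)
Your argument is exactly the paper's, just spelled out in more detail: both rest on the thinness of type $A$ indecomposables, i.e., that each $\bfV_i$ has dimension $0$ or $1$, which forces any subrepresentation (and hence any flag) to be determined by its dimension vector. Correct and the same approach.
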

\begin{proof}
The statement follows from the fact that for each $i\in I$ the dimension of $\bfV_i$ is $0$ or $1$.
\end{proof}

\begin{lem}
Assume that $\Gamma$ is a Dynkin quiver of type $D$ and that $\bfV$ is an
indecomposable representation. Then the variety $\calF_\bfv(\bfV)$ is either empty or is a singleton or is a direct product of some copies of $\mathbb P^1_{\bbC}$.
\end{lem}
\begin{proof}
For each $i\in I$ the dimension of $\bfV_i$ is $0$, $1$ or $2$. Note that for each arrow $i\to j$ of the quiver such that $\dim \bfV_i=\dim \bfV_j=2$, the map $\bfV_i\to \bfV_j$ is bijective. In view of this, we can fix an identification $\bfV_i\simeq \bbC^{\dim \bfV_i}$ for each $i\in I$ such that the following condition is satisfied: for each arrow $i\to j$ of the quiver such that $\dim \bfV_i=\dim \bfV_j=2$, the corresponding map $\bbC^2\to \bbC^2$ is the identity map.

Then it is clear that the variety $\calF_\bfv(\bfV)$ is naturally included into a direct product of $\mathbb P^1_{\bbC}$. Indeed, a point of $\calF_\bfv(\bfV)$ is given by a choice of $1$-dimensional subspaces $\bfV'_i$ inside of some $2$-dimensional $\bfV_i$'s, these choices must satisfy some list of conditions. We may have the following types of conditions.
\begin{itemize}
\item For a given $i$ with $\dim\bfV_i=2$, we may have a condition that the $1$-dimensional subspace $\bfV'_i$ of $\bfV_i\simeq \bbC^2$ is equal to a fixed $1$-dimensional subspace of $\bbC^2$ (i.e., this condition imposes some choice of $\bfV'_i$ inside of some two-dimensional $\bfV_i$).  Such a condition may appear from an arrow of the form $\bbC^2\to \bbC$ or $\bbC\to\bbC^2$.
\item For some arrow $i\to j$ such that $\dim\bfV_i=\dim\bfV_j=2$, we may have a condition that $\bfV_{i}\to \bfV_{j}$ sends $\bfV'_{i}$ to $\bfV'_{j}$. When we identify $\bfV_i$ and $\bfV_j$ with $\bbC^2$ as explained above, this condition says that $\bfV'_{i}$ and $\bfV'_{j}$ are the same subspaces of $\bbC^2$.
\item We may have an impossible condition, implying that $\calF_\bfv(\bfV)$ is empty. 
\end{itemize}

To sum up, we get the following. We see that we can realize $\calF_\bfv(\bfV)$ as a subvariety of $(\bbP^1_\bbC)^m$ for some integer $m\geqslant 0$. Denote by $(x_1,x_2,\ldots,x_m)$ an element of $(\bbP^1_\bbC)^m$. The conditions that define $\calF_\bfv(\bfV)$ inside $(\bbP^1_\bbC)^m$ may have the following form.
\begin{itemize}
	\item We may have a condition that some $x_r$ must be equal to a given element of $\bbP^1_\bbC$.
	\item We may have a condition that some $x_r$ must be equal to some $x_t$.
	\item We may have an impossible condition.
\end{itemize}

It is clear, that $\calF_\bfv(\bfV)\subset(\bbP^1_\bbC)^m$ given by such conditions is either empty or is isomorphic to $(\bbP^1_\bbC)^{m'}$ for some $0\leqslant m'\leqslant m$. This proves the statement.
\end{proof}

\begin{rk}
\label{rk:typeE}
It is clear from the proof of Theorem \ref{thm:main} that for proving an analogue of this theorem for type $E$, it is enough to prove it for indecomposable representations. 

We are going to prove a weaker statement in type $E$: the variety $\calF_\bfv(\bfV)$ has no odd cohomology over an arbitrary ring. The same argument as above shows that for proving this statement, it is enough to check this only for indecomposable representations.
\end{rk}

\section{Rigid representations}

Let $\bfk$ be a ring.
The goal of this section is to show that the variety $\calF_\bfv(\bfV)$ has no odd cohomology over $\bfk$. It is enough to check this statement only for indecomposable representations, see Remark \ref{rk:typeE}. In fact, we are going to prove this fore more general class of representations. It is well-known that for each indecomposable representation $\bfV$ of a Dynkin quiver we have $\Ext^1_\Gamma(\bfV,\bfV)=0$, see for example the proof of \cite[Thm.~1]{CB}. We will prove that if some representation of a quiver satisfies this condition, then the variety $\calF_\bfv(\bfV)$ has no odd cohomology over $\bfV$. Moreover, we will prove that in this case the variety $\calF_\bfv(\bfV)$ has a diagonal decomposition if it is not empty.

\subsection{Rigid representations}
Let $\Gamma$ be an arbitrary quiver.

\begin{df}
We say that a representation $\bfV$ of $\Gamma$ is \emph{rigid} if ${\Ext^1_\Gamma(\bfV,\bfV)=0}$.
\end{df}

Choose $i\in I$. For each flag type $\bfv$, the $d$-tuple $\bfv_i=(\bfv_{i,1},\bfv_{i,2},\ldots,\bfv_{i,d})$ can be seen as a flag type for a quiver with one vertex. The vector space $\bfV_i$ can be seen as a representation of this quiver. The variety $\calF_{\bfv_i}(\bfV_i)$ is a usual (non-complete) flag variety. The variety $\calF_\bfv(\bfV)$ is obviously included to a direct product of usual (non-complete) flag varieties in the following way:
$$
\calF_\bfv(\bfV)\subset \prod_{i\in I}\calF_{\bfv_i}(\bfV_i).
$$ 

Set $\Rep_v(\Gamma)=\bigoplus_{h\in H}\Hom(\bbC^{v_{s(h)}},\bbC^{v_{t(h)}})$. We can see an element $X\in\Rep_v(\Gamma)$ as a representation of $\Gamma$ with dimension vector $v$.

Let $\calQ$ be the subvariety of $\Rep_v(\Gamma)\times \prod_{i\in I}\calF_{\bfv_i}(\bbC^{v_i})$ given by
$$
\calQ=\{(X,\phi);~X\in \Rep_v(\Gamma),~\phi\in \calF_\bfv(X)\}.
$$

\begin{lem}
Assume that $\bfV$ is rigid. Let $\bfv$ be a flag type such that $\calF_\bfv(\bfV)$ is not empty. Then the variety $\calF_\bfv(\bfV)$ is a smooth projective variety of dimension $\sum_{r<t}<\bfvv_r,\bfvv_t>$.
\end{lem}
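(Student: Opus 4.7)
I would use the incidence variety $\calQ$ with its two natural projections: $q\colon \calQ\to \Rep_v(\Gamma)$, $(X,\phi)\mapsto X$, and $p\colon \calQ\to \prod_{i\in I}\calF_{\bfv_i}(\bbC^{v_i})$, $(X,\phi)\mapsto \phi$. The first step is to observe that $p$ realizes $\calQ$ as the total space of a vector bundle: over a fixed tuple of flags, the fiber is the set of tuples $(X_h)_{h\in H}$ whose components preserve the chosen flags, which for each arrow $h$ is a vector space of dimension $\sum_{t\leqs r}\bfvv_{r,h'}\bfvv_{t,h''}$. Since the base is smooth projective, $\calQ$ is smooth, and
$$
\dim\calQ=\sum_{i\in I}\sum_{r<t}\bfvv_{r,i}\bfvv_{t,i}+\sum_{h\in H}\sum_{t\leqs r}\bfvv_{r,h'}\bfvv_{t,h''}.
$$

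The second step is to use rigidity to show that the $G_v$-orbit $\calO_\bfV\subset\Rep_v(\Gamma)$ of $\bfV$ (with $G_v=\prod_i GL_{v_i}$) is open. By the Euler form identity recalled earlier in the paper, $\dim\End_\Gamma(\bfV)-\dim\Ext^1_\Gamma(\bfV,\bfV)=<v,v>$; rigidity yields $\dim\End_\Gamma(\bfV)=<v,v>$ and hence
$$
\dim\calO_\bfV=\dim G_v-\dim\End_\Gamma(\bfV)=\sum_{i\in I}v_i^2-<v,v>=\sum_{h\in H}v_{h'}v_{h''}=\dim\Rep_v(\Gamma),
$$
so $\calO_\bfV$ is open in $\Rep_v(\Gamma)$. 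Therefore $q^{-1}(\calO_\bfV)$ is open in $\calQ$ and hence smooth. Since $q$ is $G_v$-equivariant and $G_v$ acts transitively on $\calO_\bfV$, the restriction $q^{-1}(\calO_\bfV)\to\calO_\bfV$ is a smooth morphism whose fiber over $\bfV$ is the assumed nonempty variety $\calF_\bfv(\bfV)$; in particular $\calF_\bfv(\bfV)$ is smooth. Projectivity is immediate from the closed embedding $\calF_\bfv(\bfV)\hookrightarrow\prod_{i\in I}\calF_{\bfv_i}(\bbC^{v_i})$, since the subrepresentation conditions are Zariski-closed.

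Finally, the dimension follows from $\dim\calF_\bfv(\bfV)=\dim\calQ-\dim\Rep_v(\Gamma)$. Writing $v_{h'}v_{h''}=\sum_{r,t}\bfvv_{r,h'}\bfvv_{t,h''}$ and subtracting from the arrow sum in $\dim\calQ$ leaves exactly $-\sum_{h\in H}\sum_{r<t}\bfvv_{r,h'}\bfvv_{t,h''}$, so
$$
\dim\calF_\bfv(\bfV)=\sum_{r<t}\Bigl(\sum_{i\in I}\bfvv_{r,i}\bfvv_{t,i}-\sum_{h\in H}\bfvv_{r,h'}\bfvv_{t,h''}\Bigr)=\sum_{r<t}<\bfvv_r,\bfvv_t>,
$$
as claimed. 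The conceptual heart of the argument is the passage from rigidity to openness of the orbit via the Euler form, together with the equivariant-smoothness argument identifying $\calF_\bfv(\bfV)$ with a fiber of a smooth morphism; the rest is purely combinatorial bookkeeping.
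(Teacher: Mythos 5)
Your proof is correct and follows essentially the same route as the paper: both analyse the incidence variety $\calQ$ as a vector bundle over the product of ordinary flag varieties, use rigidity to place $\bfV$ in a dense open $G_v$-orbit of $\Rep_v(\Gamma)$, and identify $\calF_\bfv(\bfV)$ with a generic fibre of the other projection to obtain smoothness and the dimension count. The small differences --- you derive the orbit's openness from the Euler-form identity rather than citing Crawley-Boevey, and you close the smoothness step via $G_v$-equivariance over the open orbit rather than via properness, surjectivity and generic smoothness --- are cosmetic; the underlying mechanism is the same.
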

\begin{proof}
The proof is similar to \cite[Cor.~4]{CaRe}.

The map 
$$
\calQ\to \prod_{i\in I}\calF_{\bfv_i}(\bbC^{v_i}),\qquad (X,\phi)\to \phi
$$ 
is clearly a vector bundle of rank

$$
\sum_{h\in H}\sum_{r=1}^{d}\bfvv_{s(h),r}\cdot\bfv_{t(h),r}=\sum_{h\in H}\sum_{r\geqslant t}\bfvv_{s(h),r}\cdot\bfvv_{t(h),t}.
$$

On the other hand, the map 
$$
\pi\colon\calQ\to \Rep_v(\Gamma), \qquad (X,\phi)\to X
$$ is proper. If some representation $X\in \Rep_v(\Gamma)$ is isomorphic to $\bfV$, then the fiber $\pi^{-1}(X)$ is isomorphic to $\calF_\bfv(\bfV)$. Since $\bfV$ is rigid, the subset of representations in $\Rep_v(\Gamma)$ isomorphic to $\bfV$ is open, see \cite[Lem.~1]{CB}. On the other hand, since we assumed that  $\calF_\bfv(\bfV)$ is not empty, we see that the image of $\pi$ is dense. Since the image of $\pi$ is closed, the map $\pi$ is surjective. Then the generic fiber of $\pi$ is smooth and has dimension $\dim\calQ-\dim{\Rep}_v(\Gamma)$.

This implies that $\calF_\bfv(\bfV)$ is a smooth projective variety of dimension
$$
\begin{array}{rcl}
&&\sum_{h\in H}\sum_{r\geqslant t}\bfvv_{s(h),r}\cdot\bfvv_{t(h),t}+\dim \prod_{i\in I}\calF_{\bfv_i}(\bbC^{v_i})-\dim \Rep_v(\Gamma)\\
&=&\sum_{h\in H}\sum_{r\geqslant t}\bfvv_{s(h),r}\cdot\bfvv_{t(h),t}+\sum_{i\in I}\sum_{r<t}\bfvv_{i,r}\cdot\bfvv_{i,t}\\
&-&\sum_{h\in H}\sum_{r,t}\bfvv_{s(h),r}\cdot \bfvv_{t(h),t}\\
&=&\sum_{r<t}<\bfvv_r,\bfvv_t>.\\
\end{array}
$$
\end{proof}

\subsection{Diagonal decomposition}

In is proved in \cite[Thm.~37]{IEFR} that quiver Grassmannians associated to rigid representations have zero odd cohomology groups. The goal of this section is to prove the same statement for flag generalizations of quiver Grassmannians. In fact, \cite[Thm.~37]{IEFR} proves a stronger property, that they call property $(S)$, see \cite[Def.~11]{IEFR}. The key point of their proof is that to check property $(S)$, it is enough to construct a diagonal decomposition.

Let $X$ be a smooth complete complex variety. We denote $A^*(X)$ the Chow ring of $X$. Let $\Delta\subset X\times X$ be the diagonal.  Denote by $\pi_1,\pi_2\colon X\times X\to X$ the two natural projections.

\begin{df}
We say that $X$ has a \emph{diagonal decomposition} if the class $[\Delta]$ of the diagonal $\Delta\subset X\times X$ in $A^*(X\times X)$ has the following decomposition
\begin{equation}
\label{eq:diag-dec}
[\Delta]=\sum_{j\in J}\pi_1^*\alpha_j\cdot \pi_2^*\beta_j,
\end{equation}
where $J$ is a finite set and $\alpha_j,\beta_j\in A^*(X)$.
\end{df}
It is proved in \cite[Thm.~2.1]{ES} that if $X$ has a diagonal decomposition, then it satisfies property $(S)$. Then, in particular, we have $H^{\rm odd}(X,\bfk)=0$.

For a vector bundle $\calE$ on $X$ we denote by $c_i(\calE)$ its $i$th Chern class, $c_i(\calE)\in A^i(X)$. Denote by $c(\calE,t)$ the Chern polynomial $c(\calE,t)=\sum_i c_i(\calE)t^i$.

\begin{thm}
\label{thm:rigid}
Assume that $\bfV\in\Rep(\Gamma)$ is rigid and that $\bfv$ is a flag type. Then $\calF_\bfv(\bfV)$ is either empty or has a diagonal decomposition. In particular, for every ring $\bfk$, we have $H^{\rm odd}(\calF_\bfv(\bfV),\bfk)=0$.
\end{thm}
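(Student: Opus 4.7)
The plan is to adapt the argument of \cite[Thm.~36]{IEFR} for quiver Grassmannians to the flag setting. The strategy is to realize $\calF_\bfv(\bfV)$ as the zero locus of a regular section of a vector bundle on a smooth projective ambient variety that already carries a diagonal decomposition, and to transport the decomposition to this zero locus via an excess intersection computation.

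First, I would use the natural closed embedding
$$\calF_\bfv(\bfV) \hookrightarrow X := \prod_{i \in I} \calF_{\bfv_i}(\bfV_i)$$
into the product of ordinary partial flag varieties of the vector spaces $\bfV_i$. The ambient $X$ is smooth projective and admits a cell decomposition by products of Schubert cells, hence in particular a diagonal decomposition.

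Next, I would build a vector bundle $\calE$ on $X$ with a canonical section $\sigma$ whose vanishing locus equals $\calF_\bfv(\bfV)$. Let $\calU_{i,r} \subset \bfV_i \otimes \calO_X$ denote the tautological rank-$\bfv_{i,r}$ subbundle and $\calU^r_i := \calU_{i,r}/\calU_{i,r-1}$. For each arrow $h \in H$ and each $r \in [1,d-1]$ the map $\bfV_h$ induces a section $\sigma_{h,r} \in \Gamma(X, \Hom(\calU_{h',r}, \bfV_{h''}/\calU_{h'',r}))$, and $\calF_\bfv(\bfV)$ is the simultaneous zero locus of the $\sigma_{h,r}$. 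These sections obey a compatibility (restriction of $\sigma_{h,r+1}$ to $\calU_{h',r}$ coincides with $\sigma_{h,r}$ modulo $\calU_{h'',r+1}/\calU_{h'',r}$), and I take $\calE$ to be the subbundle of $\bigoplus_{h,r} \Hom(\calU_{h',r}, \bfV_{h''}/\calU_{h'',r})$ cut out by these linear relations. An iterative argument identifies $\calE$, as a filtered bundle, with associated graded $\bigoplus_h \bigoplus_r \Hom(\calU^r_{h'}, \bfV_{h''}/\calU_{h'',r})$, whose rank equals $\sum_h \sum_{r<t} \bfvv_{h',r}\bfvv_{h'',t}$. By the preceding dimension lemma (which uses rigidity of $\bfV$ crucially), this rank is exactly the codimension of $\calF_\bfv(\bfV)$ in $X$, so $\sigma$ is a regular section and $\calF_\bfv(\bfV)$ is a local complete intersection.

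From this complete intersection structure, the class of the diagonal of $\calF_\bfv(\bfV)$ is computable from the Künneth expansion of the diagonal class of $X$ by restriction and the excess intersection formula, the correction involving the Chern polynomial $c(\calE, t)$; the resulting identity is a decomposition of the form (\ref{eq:diag-dec}), which proves the diagonal decomposition. The vanishing $H^{\rm odd}(\calF_\bfv(\bfV), \bbZ) = 0$ then follows from \cite[Thm.~2.1]{ES}. The main obstacle is the bundle construction: the naive bundle $\bigoplus_{h,r=1}^{d-1}\Hom(\calU_{h',r}, \bfV_{h''}/\calU_{h'',r})$ has rank $\sum_h \sum_{s<t}(t-s)\bfvv_{h',s}\bfvv_{h'',t}$, strictly larger than the codimension as soon as $d \geq 3$, so the $\sigma_{h,r}$ are not independent and one must descend to the coherent subbundle $\calE$; checking that this subbundle is locally free of the expected rank and that the resulting section is regular is the main technical step.
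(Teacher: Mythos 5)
Your approach is genuinely different from the paper's, and unfortunately it has a fundamental gap in the last step.

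The paper does \emph{not} attempt to transport a diagonal decomposition from the ambient variety $X=\prod_i\calF_{\bfv_i}(\bfV_i)$ down to $\calF_\bfv(\bfV)$. Instead (following \cite[Thm.~3.6]{IEFR}) it constructs a vector bundle $\calE$ directly on $\calF_\bfv(\bfV)\times\calF_\bfv(\bfV)$, of rank equal to $\dim\calF_\bfv(\bfV)$, whose fiber over $(\bfV',\bfV'')$ is $\Hom_\eG(\bfV',\Phi(\bfV)/\bfV'')$; the composition $\bfV'\to\Phi(\bfV)\to\Phi(\bfV)/\bfV''$ is a section vanishing precisely on $\Delta$, so $[\Delta]=c_{\rm top}(\calE)$ by \cite[Prop.~14.1]{Ful}. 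The content of Proposition~\ref{prop:Ext=0-bundle} (which uses the $\Ext$-vanishing of Section~\ref{subs-Ext-vanish}) is exactly what makes $\calE$ a vector bundle, and then the short exact sequences (\ref{eq:ses-bundle-final}) and (\ref{eq:ses-Hom-quiver}) together with \cite{Las} show all Chern classes of $\calE$, hence $c_{\rm top}(\calE)=[\Delta]$, lie in the subring $D$ of decomposable classes. So the rigidity hypothesis is used to make $\calE$ a bundle on the self-product, not to get a regular section on the ambient space.

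The gap in your proposal is the sentence asserting that the excess intersection formula, applied to $j\times j\colon\calF\times\calF\hookrightarrow X\times X$ with $\calF=\calF_\bfv(\bfV)$, yields a decomposition of $[\Delta_{\calF}]$. What the excess formula actually gives is
$$
(j\times j)^*[\Delta_X]\;=\;i_*\bigl(c_{\rm top}(\calE'|_{\calF})\bigr)\;\in\;A^{\dim X}(\calF\times\calF),
$$
where $i\colon\Delta_{\calF}\hookrightarrow\calF\times\calF$ and $\calE'$ is your bundle on $X$. This decomposes $i_*\bigl(c_{\rm top}(\calE'|_{\calF})\bigr)$ into products of pullbacks, but it says nothing about $[\Delta_{\calF}]=i_*(1)$: these two classes live in different degrees ($\dim X$ versus $\dim\calF$), and there is no way to ``divide by'' the positive-degree class $c_{\rm top}(\calE'|_{\calF})$. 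Put more bluntly, if the scheme you describe worked, it would show that \emph{any} smooth zero locus of a regular section of a vector bundle on a variety with a diagonal decomposition again has a diagonal decomposition. This is false: a smooth plane curve of degree $\geqslant 3$ is such a zero locus on $\bbP^2$ but has nonzero $H^1(\cdot,\bbZ)$, so admits no diagonal decomposition. Hence the passage from the complete-intersection presentation on $X$ to a diagonal decomposition of $\calF$ cannot be purely formal; one needs an argument that lives on $\calF\times\calF$ itself, which is what the paper's bundle $\calE$ with fiber $\Hom_\eG(\bfV',\Phi(\bfV)/\bfV'')$ supplies. (The secondary concern you flagged, whether your subsheaf $\calE'\subset\bigoplus_{h,r}\Hom(\calU_{h',r},\bfV_{h''}/\calU_{h'',r})$ is locally free, can actually be handled by the same surjectivity argument as in the proof of Proposition~\ref{prop:Ext=0-bundle}; the real obstruction is the one above.)
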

\begin{proof}
The proof is similar to \cite[Thm.~37]{IEFR}. Similarly to their proof, we are going to construct a vector bundle on  $\calF_\bfv(\bfV)\times \calF_\bfv(\bfV)$ of rank $(\dim \calF_\bfv(\bfV))$ that has a section that is zero exactly on the diagonal. We manage to construct such a bundle due to Proposition \ref{prop:Ext=0-bundle}.

Indeed, since $\Ext^1_\Gamma(\bfV,\bfV)=0$, we can apply Proposition \ref{prop:Ext=0-bundle} directly. We obtain a vector bundle $\calE$ of the desired dimension, its fiber over $(\bfV',\bfV'')\in \calF_\bfv(\bfV)\times \calF_\bfv(\bfV)$ is $\Hom_\eG(\bfV',\Phi(\bfV)/\bfV'')$. As above, we see $\bfV'$ and $\bfV''$ as representations of $\eG$ that are subrepresentations of $\Phi(\bfV)$. The composition $\bfV'\to \Phi(\bfV)\to\Phi(\bfV)/\bfV''$ yields a section is this vector bundle. This section is zero exactly on the diagonal. Then by \cite[Prop.~14.1, Ex.~14.1.1]{Ful}, we can describe the class of the diagonal in term of the top Chern class of the bundle: $[\Delta]=c_{\rm top}(\calE)$.

Let $D\subset A^*(\calF_\bfv(\bfV)\times \calF_\bfv(\bfV))$ be the vector subspace formed by the elements of the form as in the right hand side of (\ref{eq:diag-dec}) for $X=\calF_\bfv(\bfV)$. It is easy to see that $D$ is a subring.

We want to prove that $[\Delta]\in D$. Let us prove that all the coefficients of the Chern polynomial $c(\calE,t)$ are in $D$. Recall from (\ref{eq:ses-bundle-final}) that $\calE$ is a part of a short exact sequence of vector bundles $0\to\calE\to\calF\to\calG\to0$, where 

$$
\calF={\bigoplus_{r\in[1;d]}}\calF_r,\qquad \mbox{$\calF_r$ has fiber $\Hom_{\Gamma}(\bfV'_r,\bfV/\bfV''_r)$}
$$
and
$$
\calG=\bigoplus_{r\in[1;d-1]}\calG_r,\qquad \mbox{$\calG_r$ has fiber $\Hom_{\Gamma}(\bfV'_r,\bfV/\bfV''_{r+1})$}.
$$

So we have $c(\calE,t)=c(\calF,t)\cdot c(\calG,t)^{-1}$. In particular, it is enough to prove that the coefficients of $c(\calF_r,t)$ and $c(\calG_r,t)$ are in $D$.

Let us show this for $\calF_r$, the proof for $\calG_r$ is similar.
By (\ref{eq:ses-Hom-quiver}), the bundle $\calF_r$ is a part of a short exact sequence
$$
0\to\calF_r\to \calX_r\to\calY_r\to 0, 
$$
where 
$$
\calX_r=\bigoplus_{i\in I}\calX_{i,r},\qquad \mbox{$\calX_{i,r}$ has fiber $\Hom(\bfV'_{i,r},\bfV_i/\bfV''_{i,r})$ }
$$
and 
$$
\calY_r=\bigoplus_{h\in H}\calY_{h,r},\qquad \mbox{$\calY_{h,r}$ has fiber $\Hom(\bfV'_{s(h),r},\bfV_i/\bfV''_{t(h),r})$}.
$$
Then we have $c(\calF_r,t)=c(\calX_r,t)\cdot c(\calY_r,t)^{-1}$. So it is enough to prove that the coefficients of $c(\calX_{i,r},t)$ and $c(\calY_{h,r},t)$ are in $D$. This is clear from \cite{Las}, \cite[Ex.~14.5.2]{Ful}.
\end{proof}

\subsection{Type $E$}
\begin{thm}
Assume that $\Gamma$ is a Dynkin quiver (types $A$, $D$, $E$). Then, for every ring $\bfk$, we have $H^{\rm odd}(\calF_\bfv(\bfV),\bfk)=0$.
\end{thm}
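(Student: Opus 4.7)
The plan is to combine the reduction-to-indecomposables machinery already developed in Section 2 with the rigidity result of Theorem \ref{thm:rigid}. The statement concerns an arbitrary representation $\bfV$ of a Dynkin quiver $\Gamma$, but by Remark \ref{rk:typeE} (which is itself a consequence of Proposition \ref{prop:reduction-indec} together with the Künneth-style argument implicit in Proposition \ref{prop:Ext=0-bundle}), it suffices to verify the vanishing $H^{\mathrm{odd}}(\calF_\bfv(\bfV),\bbZ)=0$ under the assumption that $\bfV$ is indecomposable.

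Once we are reduced to the indecomposable case, I would invoke the classical Gabriel-type fact that every indecomposable representation of a Dynkin quiver is rigid, i.e.\ $\Ext^1_\Gamma(\bfV,\bfV)=0$. This is standard and is the same input cited earlier in the paper just before Section 3.1 (see the proof of \cite[Thm.~1]{CB}). With rigidity in hand, Theorem \ref{thm:rigid} applies directly to $\bfV$ and any flag type $\bfv$: either $\calF_\bfv(\bfV)$ is empty, in which case there is nothing to prove, or it admits a diagonal decomposition, in which case $H^{\mathrm{odd}}(\calF_\bfv(\bfV),\bbZ)=0$ by the cited result of Ellingsrud–Strømme \cite[Thm.~2.1]{ES}.

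In short, the proof is a three-line assembly: (i) reduce to indecomposable $\bfV$ via Remark \ref{rk:typeE}; (ii) observe that indecomposables over Dynkin quivers are rigid; (iii) apply Theorem \ref{thm:rigid}. There is no genuine obstacle here, since all of the substantive content has already been proved: the hard work was establishing the $\Ext$-vanishing properties of $\Rep^0(\eG)$ in Section 2.5, using them to prove the bundle statement Proposition \ref{prop:Ext=0-bundle}, and then leveraging that bundle to produce a diagonal section in Theorem \ref{thm:rigid}. The only thing remaining is to notice that in the Dynkin case the rigidity hypothesis is automatically satisfied on each indecomposable summand, which is precisely what makes the reduction to indecomposables useful in type $E$ where one cannot hope to describe $\calF_\bfv(\bfV)$ by an $\alpha$-partition by hand.
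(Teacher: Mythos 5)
Your proof is correct and is essentially the paper's own argument, with a small cosmetic simplification: the paper handles types $A$ and $D$ by invoking Theorem \ref{thm:main} (the $\alpha$-partition result) and reserves the reduce-to-indecomposables-plus-rigidity argument for type $E$, whereas you apply the rigidity route (Remark \ref{rk:typeE} plus rigidity of indecomposables plus Theorem \ref{thm:rigid}) uniformly to all three types. Since the reduction in Remark \ref{rk:typeE} and the rigidity of indecomposables are both insensitive to the specific Dynkin type, your uniform treatment is valid and arguably tidier, though it yields only vanishing of odd cohomology in types $A$ and $D$ rather than the stronger $\alpha$-partition statement that the paper's route gives for free.
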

\begin{proof}
For types $A$ and $D$ the statement already follows from Theorem \ref{thm:main}. 

Let us prove the statement for type $E$. As explained in Remark \ref{rk:typeE}, it is enough to check the statement only for indecomposable representations. Since indecomposable representations are rigid, the statement follows from Theorem \ref{thm:rigid}.
\end{proof}

\noindent{\it Acknowledgments}.\ The author would like to thank the anonymous referee for the careful reading of the manuscript, as well as for the propositions and comments, which have improved this paper.


\begin{thebibliography}{9}
\bibitem{BBD} A. Beilinson, J. Bernstein, P. Deligne, \emph{Faisceaux pervers}, Ast\' erisque, 1982.

\bibitem{BKM} J. Brundan, A. Kleshchev, P. J. McNamara, \emph{Homological Properties of Finite Type Khovanov-Lauda-Rouquier Algebras}, Duke Math. J., 163(7), 1353-1404, 2014.

\bibitem{CaRe} P. Caldero, M. Reineke, \emph{On the quiver Grassmannian in the acyclic case}, Journal of Pure and Applied Algebra, 212(11), 2369-2380, 2008.

\bibitem{CB} W. Crawley-Boevey, \emph{Lectures on representations of quivers}, Lectures in Oxford in 1992, www.math.uni-bielefeld.de/$\sim$wcrawley/quivlecs.pdf.

\bibitem{ES} G. Ellingsrud, S. A. Str{\o}mme, \emph{Towards the Chow ring of the Hilbert scheme of P2}, J. Reine Angew. Math.,
441:33-44, 1993.

\bibitem{Ful} W. Fulton, \emph{Intersection theory}, volume 2 of Ergebnisse der Mathematik und ihrer Grenzgebiete. 3. Folge. A Series
of Modern Surveys in Mathematics. Springer-Verlag, Berlin, second edition, 1998.

\bibitem{GLS} C. Geiss, B. Leclerc, J. Schröer, \emph{Generic bases for cluster algebras and the Chamber Ansatz}, Journal of the American Mathematical Society, 25(1), 21-76, 2012.

\bibitem{IEFR} G. Cerulli Irelli, F. Esposito, H. Franzen, M.  Reineke, \emph{Cell decompositions and algebraicity of cohomology for quiver Grassmannians}, Advances in Mathematics, 379, 107544, 2021.

\bibitem{JMW} D. Juteau, C. Mautner, G. Williamson, \emph{Parity sheaves}, Journal of the American Mathematical Society, 27(4), 1169-1212, 2014.

\bibitem{KL} M. Khovanov,  A. Lauda, \emph{A diagrammatic approach to categorification of quantum groups I}, Representation Theory of the American Mathematical Society, 13(14), 309-347, 2009.

\bibitem{Las} A. Lascoux, \emph{Classes de Chern d'un produit tensoriel}, CR Acad. Sci. Paris, 286, 385-387, 1978.

\bibitem{Mak-parity} R. Maksimau, \emph{Canonical basis, KLR algebras and parity sheaves}, Journal
of Algebra, 422, 563-610, 2015.

\bibitem{MakMin} R. Maksimau, A. Minets, \emph{KLR and Schur algebras for curves and semi-cuspidal representations}, International Mathematics Research Notices, 2023(8), 6976-7052, 2023.

\bibitem{McNam} P. J. McNamara, \emph{Representation theory of geometric extension algebras}, preprint arXiv:1701.07949, 2017.

\bibitem{Rein} M. Reineke, \emph{Quivers, desingularizations and canonical bases}, Studies in memory of Issai Schur, 325-344, Birkh\"auser, Boston, MA 2003.

\bibitem{Rouq-2KM} R. Rouquier, \emph{2-Kac-Moody algebras}, preprint arXiv preprint arXiv:0812.5023.

\bibitem{VV} M. Varagnolo, E. Vasserot, \emph{Canonical bases and KLR-algebras}, Journal f\"ur die reine une angewandte Mathematik, vol. 659, 67-100, 2011.

\bibitem{Will} G. Williamson, \emph{On an analogue of the James conjecture}, Representation Theory of the American Mathematical Society, 18(2), 15-27, 2014.

\bibitem{Zhou} X. Zhou, \emph{Affine Pavings of Quiver Flag Varieties}, preprint arXiv:2206.00444, 2022.


\end{thebibliography}
\end{document}